\documentclass[11pt]{amsart}

\textwidth 16.00cm
\textheight 20cm
\topmargin 0.0cm
\oddsidemargin 0.0cm

\evensidemargin 0.0cm
\parskip 0.0cm

\usepackage{amssymb}
\usepackage{mathrsfs}
\usepackage{amsfonts}
\usepackage{latexsym,amsmath,amsthm,amssymb,amsfonts}
\usepackage[usenames]{color}
\usepackage{xspace,colortbl}
\usepackage{graphicx}
\usepackage{tipa}

\newcommand{\be}{\begin{equation}}
\newcommand{\ee}{\end{equation}}
\newcommand{\beq}{\begin{eqnarray}}
\newcommand{\eeq}{\end{eqnarray}}

\newtheorem{prop}{Proposition}[section]

\newtheorem{remark}[prop]{Remark}

\def\begeq{\begin{equation}}
\def\endeq{\end{equation}}

\def\odot{\setbox0=\hbox{$\bigcirc$}\relax \mathbin {\hbox
to0pt{\raise.5pt\hbox to\wd0{\hfil $\wedge$\hfil}\hss}\box0 }}

\numberwithin{equation} {section}

\numberwithin{equation}{section}
\textheight=8.8in
\textwidth=6.28in
\topmargin=0mm
\oddsidemargin=0mm
\evensidemargin=0mm

\newtheorem{theorem}{\bf Theorem}[section]

\newtheorem{lemma}[theorem]{\bf Lemma}

\newtheorem{corollary}[theorem]{\bf Corollary}

\allowdisplaybreaks

\begin{document}

\title[star-shaped hypersurfaces evolving in a cone]
 {A class of inverse curvature flows for star-shaped hypersurfaces evolving in a cone}

\author{
  Jing Mao$^{1,2,\ast}$, Qiang Tu$^{1,\ast}$}
\address{
$^{1}$ Faculty of Mathematics and Statistics, Key Laboratory of
Applied Mathematics of Hubei Province, Hubei University, Wuhan
430062, China. }

\address{$^{2}$  Department of Mathematics, Instituto Superior T\'{e}cnico, University of Lisbon,
Av. Rovisco Pais, 1049-001 Lisbon, Portugal}

\email{jiner120@163.com, qiangtu@whu.edu.cn}

\thanks{$\ast$ Corresponding authors}

\date{}
\begin{abstract}
Given a smooth convex cone in the Euclidean $(n+1)$-space
($n\geq2$), we consider strictly mean convex hypersurfaces with
boundary which are star-shaped with respect to the center of the
cone and which meet the cone perpendicularly. If those hypersurfaces
inside the cone evolve by a class of inverse curvature flows, then,
by using the convexity of the cone in the derivation of the
gradient and H\"{o}lder estimates, we can prove
that this evolution exists for all the time and the evolving
hypersurfaces converge smoothly to a piece of a round sphere as time
tends to infinity.
\end{abstract}

\maketitle {\it \small{{\bf Keywords}: Inverse curvature flows,
star-shaped, Neumann boundary value problem.}

{{\bf MSC 2010}: Primary 53C44, Secondary 35K10.}}

\section{Introduction}

Recently, Chen, Mao, Tu and Wu \cite{cmtw} considered the evolution
of a one-parameter family of closed, star-shaped and strictly mean
convex hypersurfaces $M_{t}^{n}$, given by $X(\cdot, t):
\mathbb{S}^{n}\times[0,T)\rightarrow \mathbb{R}^{n+1}$ with some
$T<\infty$, under the flow
\begin{equation}\label{Eq-1}
\left\{
\begin{aligned}
&\frac{\partial }{\partial t}X=\frac{1}{|X|^{\alpha}H(X)}\nu,\\\
&X(\cdot,0)=M_{0}^{n},
\end{aligned}
\right.
\end{equation}
where $\nu$ is the unit outward normal vector of $M_{t}^{n}$, $H$ is
the mean curvature of $M_{t}^{n}$, and $|X|$ is the distance from
the point $X(x, t)$ to the origin of $\mathbb{R}^{n+1}$. For
$\alpha\geq0$, they showed the long-time existence and the
asymptotical behavior of the flow \eqref{Eq-1}. Clearly, when
$\alpha=0$, the flow \eqref{Eq-1} degenerates into the classical
inverse mean curvature flow (IMCF for short), and therefore
Gerhardt's or Urbas's classical result for the IMCF in
$\mathbb{R}^{n+1}$ (see \cite{Ge90,Ur}) is covered by the main
conclusion of \cite{cmtw} as a special case.

As we know, the classical IMCF is scale invariant. However,
generally, the flow (\ref{Eq-1}) is non-scale-invariant except the
case $\alpha=0$. The meaning of studying the non-scale-invariant
version of inverse curvature flows (ICFs for short) have been
revealed clearly by Gerhardt \cite{Ge14}, where he investigated the
non-scale-invariant version of the classical ICF considered by
himself in \cite{Ge90}. This improvement from the scale invariant
case to the non-scale-invariant case permits that Gerhardt's main
conclusion in \cite{Ge14} covers some interesting conclusions in
\cite{qrl,ocs} for the inverse Gauss curvature flow (IGCF for short)
and the power of the IGCF. Based on this reason, it also should be
interesting to investigate properties of the non-scale-invariant
flow (\ref{Eq-1}) in different settings -- for this purpose, please
see the series work \cite{cmtw,chmw} of Prof. Jing Mao and his
collaborators. The flow (\ref{Eq-1}) is an initial value problem of
second-order parabolic PDEs. \emph{Could we consider the case of
boundary value problems?} This motivation forces us to consider the
evolution of hypersurfaces with boundary under the ICFs considered
in \cite{cmtw}.

Marquardt \cite{Mar} considered the classical IMCF with a Neumann
boundary condition (NBC for short), where the embedded flowing
hypersurfaces were supposed to be perpendicular to a smooth convex
cone in $\mathbb{R}^{n+1}$. He proved that the flow exists for all
the time and after rescaling, the evolving
 hypersurfaces converge smoothly  to a piece of a round sphere.
Later, Lamber and Scheuer \cite{La} extended this interesting
conclusion to the situation that the hypersurfaces are perpendicular
to the prescribed sphere. In 2017, Chen, Mao, Xiang and Xu
\cite{cmxx} improved Marquardt's main conclusion above to the case
that the ambient space is the warped product
$I\times_{\lambda(r)}N^{n}$, where $I\subseteq\mathbb{R}$ is an
unbounded interval of $\mathbb{R}$, $N^{n}$ is an $n$-dimensional
Riemannian manifold with nonnegative Ricci curvature, and the
warping function $\lambda(r)$ satisfies some growth assumptions.
Inspired by these works,
 it should be interesting to consider the flow (\ref{Eq-1}) with a
 prescribed NBC. In fact, we can prove the following:

\begin{theorem}\label{main1.1}
Let $\alpha>0$, $M^n\subset\mathbb{S}^n$ be some piece of the unit
sphere $\mathbb{S}^{n}\subset\mathbb{R}^{n+1}$, and
$\Sigma^n:=\{rx\in \mathbb{R}^{n+1}| r>0, x\in \partial M^n\}$ be
the boundary of a smooth, convex cone that is centered at the origin
and has outward unit normal $\mu$. Let
$X_{0}:M^{n}\rightarrow\mathbb{R}^{n+1}$ such that
$M_{0}^{n}:=X_{0}(M^{n})$ is a compact, strictly mean convex
$C^{2,\gamma}$-hypersurface ($0<\gamma<1$) which is star-shaped with
respect to the center of the cone.
 Assume that
 \begin{eqnarray*}
M_{0}^{n}=\mathrm{graph}_{M^n}u_{0}
 \end{eqnarray*}
 is a graph over $M^n$ for a positive
map $u_0: M^n\rightarrow \mathbb{R}$ and
 \begin{eqnarray*}
\partial M_{0}^{n}\subset \Sigma^n, \qquad
\langle\mu\circ X_{0}, \nu_0\circ X_{0} \rangle|_{\partial M^n}=0,
 \end{eqnarray*}
 where $\nu_0$ is the unit normal to $M_{0}^{n}$. Then

(i) there exists a family of strictly mean convex hypersurfaces
$M_{t}^{n}$ given by the unique embedding
\begin{eqnarray*}
X\in C^{2+\gamma,1+\frac{\gamma}{2}} (M^n\times [0,\infty),
\mathbb{R}^{n+1}) \cap C^{\infty} (M^n\times (0,\infty),
\mathbb{R}^{n+1})
\end{eqnarray*}
with $X(\partial M^n, t) \subset \Sigma^n$ for $t\geq 0$, satisfying the following system
\begin{equation}\label{Eq}
\left\{
\begin{aligned}
&\frac{\partial }{\partial t}X=\frac{1}{|X|^{\alpha}H}\nu
&&in~
M^n \times(0,\infty)\\
&\langle \mu \circ X, \nu\circ X\rangle =0&&on~ \partial M^n \times(0,\infty)\\
&X(\cdot,0)=M_{0}^{n}  && in~M^n
\end{aligned}
\right.
\end{equation}
where $H$ is the mean curvature of
$M_{t}^{n}:=X(M^n,t)=X_{t}(M^{n})$, $\nu$ is the unit outward normal
vector of $M_{t}^{n}$ and $|X|$ is the distance from the point $X(x,
t)$ to the origin. Moreover, the H\"{o}lder norm on the parabolic
space $M^n\times(0,\infty)$ is defined in the usual way (see, e.g.,
\cite[Note 2.5.4]{Ge3}).

(ii) the leaves $M_{t}^{n}$ are graphs over $M^n$, i.e.,
 \begin{eqnarray*}
M_{t}^{n}=\mathrm{graph}_{M^n}u(\cdot, t).
\end{eqnarray*}

(iii) Moreover, the evolving hypersurfaces converge smoothly after
rescaling to a piece of a round sphere of radius $r_{\infty}$, where
$r_{\infty}$ satisfies
\begin{eqnarray*}
\frac{1}{\sup\limits_{M^{n}}u_{0}}\left(\frac{\mathcal{H}^n(M_{0}^{n})}{\mathcal{H}^n(M^{n})}\right)^{\frac{1}{n}}\leq
r_{\infty}
\leq\frac{1}{\inf\limits_{M^{n}}u_{0}}\left(\frac{\mathcal{H}^n(M_{0}^{n})}{\mathcal{H}^n(M^{n})}\right)^{\frac{1}{n}},
\end{eqnarray*}
where $\mathcal{H}^n(\cdot)$ stands for the $n$-dimensional
Hausdorff measure of a prescribed $n$-manifold.
\end{theorem}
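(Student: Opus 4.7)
The plan is to reparametrize the flow as a scalar parabolic equation for the radial graph function $u(x,t)$ on $M^n\subset\mathbb{S}^n$, carry out an a-priori-estimate / continuation argument, and then analyze the rescaled flow as $t\to\infty$. Writing $X(x,t)=u(x,t)\,x$ with $x\in M^n$ and using that $\Sigma^n$ is the radial cone over $\partial M^n$, the Neumann condition $\langle\mu\circ X,\nu\rangle=0$ translates into the vanishing of the derivative of $u$ in the outward co-normal direction to $\partial M^n$ inside $\mathbb{S}^n$ (still call it $\mu$), and the system \eqref{Eq} becomes the oblique-derivative problem
\begin{equation*}
\partial_t u \;=\; \frac{v}{u^{\alpha}\,H[u]} \quad\text{in } M^n\times(0,T),\qquad
\nabla_{\mu} u\big|_{\partial M^n}=0,\qquad u(\cdot,0)=u_0,
\end{equation*}
where $v=\sqrt{1+|\nabla u|^2/u^2}$ in the standard metric on $\mathbb{S}^n$ and $H[u]$ is the usual quasilinear elliptic expression for the mean curvature of a radial graph. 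Short-time existence in $C^{2+\gamma,1+\gamma/2}$ then follows from the standard linear theory for uniformly parabolic problems with oblique boundary condition, so it suffices to establish a-priori estimates on the maximal existence interval $[0,T^*)$ that do not depend on $T^*$.

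For the $C^0$ estimate one uses radial spheres as barriers: spheres centred at the origin automatically meet $\Sigma^n$ orthogonally and are explicit solutions of \eqref{Eq}, so the maximum principle applied to the scalar equation above yields two-sided bounds of the form $c_1\,\Phi(t)\le u(\cdot,t)\le c_2\,\Phi(t)$, where $\Phi(t)$ is the radius of the comparison sphere at time $t$; in particular star-shapedness and positivity are preserved along the flow. The critical step is the gradient estimate. Here one computes the evolution of an auxiliary function such as $w=\log v$ (or $v$ times an appropriate power of $u$) under the linearized operator and argues by maximum principle: at an interior maximum the good sign of the derivatives of $u^{-\alpha}H^{-1}$ forces a bound, while at a boundary maximum the Hopf-type computation produces a term involving the second fundamental form of $\Sigma^n$, and this is precisely where the convexity of the cone enters --- the boundary term has the correct sign exactly when $\Sigma^n$ is convex, ruling out a boundary maximum. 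The $C^2$ estimate is then obtained by running the maximum principle on $\log H$ to prevent $H\to 0$ (and hence preserving strict mean convexity) and on $H$ itself to prevent blow-up, with the oblique boundary condition again handled by the convexity of $\Sigma^n$.

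With uniform $C^2$ bounds, the scalar equation is uniformly parabolic with $C^\gamma$ coefficients; Krylov--Safonov and Schauder estimates for oblique-derivative problems give uniform $C^{2+\gamma,1+\gamma/2}$ bounds, bootstrapping yields $C^\infty$ estimates, and long-time existence follows by the standard continuation argument, proving (i) and (ii). To obtain convergence after rescaling (part (iii)), introduce $\tilde u(x,t)=u(x,t)/\Phi(t)$ where $\Phi(t)$ is the radius of the reference evolving sphere (so $\Phi$ grows exponentially with a rate $\lambda=\lambda(n,\alpha)>0$), observe that the oscillation of $\tilde u$ obeys a parabolic equation with a good absorbing term from the $u^{-\alpha}H^{-1}$ factor, and deduce exponential decay of $\mathrm{osc}_{M^n}\tilde u(\cdot,t)$, so that $\tilde u(\cdot,t)\to r_\infty$ smoothly for some constant $r_\infty>0$. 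The quantitative bounds on $r_\infty$ claimed in (iii) follow by comparing $\mathcal{H}^n(M_t^n)=\int_{M^n}u^n v\,d\mathcal{H}^n$ at $t=0$ and in the limit $t\to\infty$ (where $v\to 1$ and $u\to r_\infty$), combined with the initial $C^0$ barriers $\inf u_0\le u(\cdot,0)\le \sup u_0$.

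The main obstacle I anticipate is the combined $C^1$--$C^2$ estimate up to the boundary: the obliqueness of the Neumann condition relative to the evolving geometry forces one to exploit the convexity of $\Sigma^n$ very carefully at a potential boundary maximum of the auxiliary functions, along the lines of Marquardt's treatment \cite{Mar} of the pure IMCF case, while the extra factor $|X|^{-\alpha}$ demands that powers of $u$ be tracked precisely through those computations as in \cite{cmtw}. Once the boundary estimates are in place, the short-time theory, barrier arguments, bootstrapping, and asymptotic analysis of the rescaled flow are comparatively routine.
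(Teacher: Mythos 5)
Your overall architecture (pass to the scalar radial-graph equation, prove a priori $C^0$ and $C^1$ estimates using sphere barriers and the convexity of $\Sigma^n$ at the boundary, continue the solution, and then study the rescaled flow) matches the paper's. The $C^0$ estimate via the ODE/barrier comparison, and the gradient estimate via the maximum principle applied to $\tfrac12|D\varphi|^2$ with the Hopf boundary term having the right sign precisely because $\Sigma^n$ is convex, are exactly what the paper does (Lemmas \ref{lemma3.1} and \ref{Gradient}).

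Where you diverge from the paper, and where there is a genuine gap, is the passage from $C^1$ to higher regularity. You propose to obtain a $C^2$ estimate by running the maximum principle on $\log H$ and on $H$ itself. This is both not what the paper does and incomplete as stated: a two-sided bound on $H$ (which in fact the paper already gets \emph{without} any second-order maximum-principle argument, directly from the $\dot\varphi$ and gradient estimates -- see Corollary \ref{w-ij}, since $\dot\varphi = v/(u^{1+\alpha}H)$) does \emph{not} bound the full second fundamental form $|A|$ or $|D^2u|$; it is perfectly compatible with individual principal curvatures blowing up with opposite signs. So this step, as you have written it, does not yield the $C^2$ estimate that you then feed into Krylov--Safonov and Schauder. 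To make your route work you would need to control $|A|$, e.g.\ by a maximum-principle argument on $|A|^2/H^2$ or on the largest principal curvature, and then you would face the nontrivial problem of controlling the Hopf boundary term for those quantities at $\partial M^n$.

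The paper sidesteps the $C^2$ estimate entirely. Having bounded $u$, $Du$ and $H\Theta$ pointwise (so that the linearized operator is already uniformly parabolic), it introduces $\Psi = \Phi/w = 1/(|X|^\alpha H\langle X,\nu\rangle)$, derives a divergence-form evolution equation for the rescaled speed $w(s)=\Theta^\alpha\Psi$ (Lemma \ref{EVQ}, eq.\ \eqref{div-for-1}), and runs an energy/De Giorgi class argument to get $[\widetilde H]_\beta$; a Morrey estimate for the frozen-time elliptic equation gives $[D\widetilde u]_\beta$ (Lemma \ref{lemma4-3}); linear Schauder theory then gives $C^{2+\beta,1+\beta/2}$ and bootstrapping gives smoothness. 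This is the Marquardt route and works cleanly up to the Neumann boundary because the energy estimates only see the boundary through the homogeneous Neumann condition. Your route is more in the spirit of Gerhardt's closed-hypersurface arguments; it can be made to work but requires the full $|A|$ bound (not just $H$) and a careful Hopf computation for a curvature test function at the cone boundary, which you have not supplied. The remainder of your proposal (bootstrapping, continuation, exponential decay of the gradient of the rescaled graph, and computing $r_\infty$ from the evolution of $\mathcal{H}^n(M_t^n)$) is in line with Section 5 of the paper.
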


\begin{remark}
\rm{ In order to avoid any potential confusion with the mean
curvature
 $H$, we use $C^{m+2+\gamma,\frac{m+2+\gamma}{2}}$ not
$H^{m+2+\gamma,\frac{m+2+\gamma}{2}}$ used in \cite{Ge14} to
represent the parabolic H\"{o}lder norm. It is easy to check that
all the arguments in the sequel are still valid for the case
$\alpha=0$ except some minor changes should be made. For instance,
if $\alpha=0$, then (\ref{blow}) becomes $\phi(x,t)=\frac{1}{n}t+c$.
However, in this setting, one can also get the $C^0$ estimate as
well. Clearly, when $\alpha=0$, the flow (\ref{Eq}) degenerates into
the parabolic system with the vanishing NBC in \cite[Theorem
1]{Mar}, and correspondingly, our main conclusion here covers
\cite[Theorem 1]{Mar} as a special case. }
\end{remark}

\section{The corresponding scalar equation}

\subsection{The geometry of graphic hypersurfaces}

\

For an $n$-dimensional Riemannian manifold $(M^n,g)$, the Riemann
curvature (3,1)-tensor $Rm$ is defined by
\begin{equation*}
Rm(X, Y)Z=-\nabla_{X}\nabla_{Y}Z+\nabla_{Y}\nabla_{X}Z+\nabla_{[X,
Y]}Z.
\end{equation*}
Pick a local coordinate chart $\{x^{i}\}_{i=1}^{n}$ of $M$. The
component of the (3,1)-tensor $Rm$ is defined by
\begin{equation*}
Rm\bigg({\frac{\partial}{\partial x^i}}, {\frac{\partial}{\partial x^j}}\bigg){\frac{\partial}{\partial x^k}}
\doteq R_{ijk}^{l}{\frac{\partial}{\partial x^l}}
\end{equation*}
and $R_{ijkl}:= g_{lm}R_{ijk}^{m}$. Then, we have the standard
commutation formulas (the Ricci identities):
\begin{equation*}
(\nabla_{i}\nabla_{j}-\nabla_{j}\nabla_{i})\alpha_{k_{1}\cdot\cdot\cdot
k_{r}} =\sum_{l=1}^{r}R^{m}_{ijk_{l}} \alpha_{k_{1}\cdot\cdot\cdot
k_{l-1}m k_{l+1}\cdot\cdot\cdot k_{r}}.
\end{equation*}
If furthermore $(M^n, g)$ is an immersed hypersurface in
$\mathbb{R}^{n+1}$ with $R_{ijkl}$ the Riemannian curvature of $M$.
Let $\nu$ be a given unit outward normal vector and $h_{ij}$ be the
second fundamental form of the hypersurface $M$ with respect to
$\nu$, that is,
 \begin{eqnarray*}
h_{ij}=-\left\langle\frac{\partial^2 X}{\partial x^i\partial x^j},
\nu\right\rangle_{\mathbb{R}^{n+1}}.
 \end{eqnarray*}
 Set $X_{ij}=\partial_i
\partial_j X-\Gamma_{ij}^{k}X_k$, where $\Gamma_{ij}^{k}$ is the
Christoffel symbol of the metric on $M^n$. We need the following
identities
\begin{equation}\label{Gauss for}
X_{ij}=-h_{ij}\nu, \quad \quad \mbox{Gauss formula}
\end{equation}

\begin{equation}\label{Wein for}
\nu_{i}=h_{ij}X^j, \quad \quad \mbox{Weingarten formula}
\end{equation}

\begin{equation}\label{Gauss}
R_{ijkl}=h_{ik}h_{jl}-h_{il}h_{jk}, \quad \quad \mbox{Gauss equation}
\end{equation}

\begin{equation}\label{Codazzi}
\nabla_{k}h_{ij}=\nabla_{j}h_{ik}, \quad \quad \mbox{Codazzi
equation.}
\end{equation}
Then, using the Codazzi equation we get
\begin{equation*}
\nabla_{i}\nabla_{j}h_{kl}=\nabla_{i}(\nabla_{j}h_{lk})
=\nabla_{i}(\nabla_{k}h_{lj})=\nabla_{i}\nabla_{k}h_{lj}.
\end{equation*}
Then, using the Ricci identities we have
\begin{equation*}
\nabla_{i}\nabla_{j}h_{kl}=\nabla_{k}\nabla_{i}h_{lj}+R_{iklm}h^{m}_{j}
+R_{ikjm}h^{m}_{l}.
\end{equation*}
Using the Codazzi equation again, it follows that
\begin{eqnarray*}
\nabla_{i}\nabla_{j}h_{kl}&=&\nabla_{k}(\nabla_{l}h_{ji})+R_{iklm}h^{m}_{j}
+R_{ikjm}h^{m}_{l}\\
&=&\nabla_{k}\nabla_{l}h_{ji}+R_{iklm}h^{m}_{j}+R_{ikjm}h^{m}_{l}.
\end{eqnarray*}
Using the Gauss equation, we have
\begin{eqnarray}\label{2rd}
\nabla_{i}\nabla_{j}h_{kl}
&=&\nabla_{k}\nabla_{l}h_{ij}+h^{m}_{j}(h_{il}h_{km}-h_{im}h_{kl})+h^{m}_{l}(h_{ij}h_{km}-h_{im}h_{kj}).
\end{eqnarray}

\subsection{The corresponding scalar equation}

\

In coordinates on the sphere $\mathbb{S}^n$, we equivalently
formulate the problem by the corresponding scalar equation. Since
the initial $C^{2,\gamma}$-hypersurface is star-sharped, there
exists a scalar function $u_0\in C^{2,\gamma} (M^{n})$ such that
 $X_0: M^{n} \rightarrow \mathbb{R}^{n+1}$ has the form $x \mapsto
(x,u_0(x))$. The hypersurface $M_{t}^{n}$ given by the embedding
\begin{eqnarray*}
X(\cdot, t): M^{n}\rightarrow \mathbb{R}^{n+1}
\end{eqnarray*}
at time $t$ may be represented as a graph over $M^n\subset
\mathbb{S}^{n}$, and then we can make ansatz
\begin{eqnarray*}
X(x,t)=\left(x,u(x,t)\right)
\end{eqnarray*}
for some function $u: M^{n} \times [0,T) \rightarrow \mathbb{R}$.

\begin{lemma} \label{lemma2-1}
Define $p:=X(x,t)$ and assume that a point on $\mathbb{S}^{n}$ is
described by local coordinates $\xi^{1},\ldots,\xi^{n}$, that is,
$x=x(\xi^{1},\ldots,\xi^{n})$. Let $\partial_i$ be the corresponding
coordinate fields on $\mathbb{S}^n$ and
$\sigma_{ij}=g_{\mathbb{S}^n}(\partial_i,\partial_j)$ be the metric
on $\mathbb{S}^n$. Let $u_{i}=D_{i}u$, $u_{ij}=D_{j}D_{i}u$, and
$u_{ijk}=D_{k}D_{j}D_{i}u$ denote the covariant derivatives of $u$
with respect to the round metric $g_{\mathbb{S}^n}$ and let $\nabla$
be the Levi-Civita connection of $M_{t}^{n}$ with respect to the
metric $g$ induced from the standard metric of $\mathbb{R}^{n+1}$.
Then, the following formulas hold:

(i) The tangential
vector on $M_{t}$ is
\begin{eqnarray*}
X_{i}=\partial_{i}+u_i\partial_{r}
\end{eqnarray*}
and the corresponding outward unit normal vector is given by
\begin{eqnarray*}
\nu=\frac{1}{v}\left(\partial_r-\frac{1}{u^2}u^j\partial_j\right),
\end{eqnarray*}
where $u^{j}=\sigma^{ij}u_{i}$, and $v:=\sqrt{1+u^{-2}|D u|^2}$
with $D u$ the gradient of $u$.

(ii) The induced metric $g$ on $M_t$ has the form
\begin{equation*}
g_{ij}=u^2\sigma_{ij}+u_{i} u_{j}
\end{equation*}
and its inverse is given by
\begin{equation*}
g^{ij}=\frac{1}{u^2}\left(\sigma^{ij}-\frac{u^i  u^j
}{u^2v^{2}}\right).
\end{equation*}

(iii) The second fundamental form of
$M_t$ is given by
\begin{eqnarray*}
h_{ij}=\frac{1}{v}\left(-u_{ij} +u
\sigma_{ij}+\frac{2}{u}{u_i  u_j }\right).
\end{eqnarray*}
and
\begin{eqnarray*}
h^i_j=g^{ik}h_{jk}=\frac{1}{u v}\delta^i_j-\frac{1}{u
v}\widetilde{\sigma}^{ik}\varphi_{jk},
\qquad\widetilde{\sigma}^{ij}=\sigma^{ij}-\frac{\varphi^i
\varphi^j}{v^2}.
 \end{eqnarray*}
Naturally, the mean curvature is given by
\begin{eqnarray*}
H=\sum_{i=1}^{n}h^i_i=\frac{1}{u
v}\bigg(n-(\sigma^{ij}-\frac{\varphi^i\varphi^j}{v^{2}})\varphi_{ij}\bigg),
\end{eqnarray*}
where $\varphi=\log u$.

(iv) Let $p\in \Sigma^n$, $\hat{\mu}(p)$ be the normal to $\Sigma^n$ at $p$ and $\mu=\mu^i(x) \partial_i$ be the normal to $\partial M^n$ at $x$.  Then
\begin{eqnarray*}
\langle\hat{\mu}(p), \nu(p) \rangle=0 \Leftrightarrow \mu^i(x) u_i(x,t)=0.
\end{eqnarray*}

\end{lemma}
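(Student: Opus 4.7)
The proof is a direct coordinate computation in the polar decomposition $\mathbb{R}^{n+1}\setminus\{0\}\cong(0,\infty)\times\mathbb{S}^n$. Parametrize the graph by $X(\xi,t)=u(\xi,t)\,x(\xi)$, where $x:U\subset M^n\to\mathbb{S}^n\hookrightarrow\mathbb{R}^{n+1}$ is the standard embedding. I will use throughout that, in the polar coordinate basis $\{\partial_r,\partial_i\}$ on $\mathbb{R}^{n+1}$, one has $\partial_r=x$ and, at the point $X=ux$, the identification $\partial_i=u\,\partial_i x$; consequently $\langle\partial_r,\partial_r\rangle=1$, $\langle\partial_r,\partial_i\rangle=0$, and $\langle\partial_i,\partial_j\rangle=u^2\sigma_{ij}$.

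Item (i) is then immediate: differentiating $X=ux$ yields $X_i=u_i\,x+u\,\partial_i x=\partial_i+u_i\,\partial_r$. For the outward unit normal I set $\nu=a\,\partial_r+b^j\,\partial_j$ and impose $\langle\nu,X_i\rangle=0$, which forces $b^j=-a\,u^j/u^2$; unit length then gives $a^2(1+u^{-2}|Du|^2)=a^2 v^2=1$, so $a=1/v>0$ and the stated formula follows. For (ii), $g_{ij}=\langle X_i,X_j\rangle=u^2\sigma_{ij}+u_iu_j$ is a direct calculation, and the inverse is obtained by the Sherman--Morrison formula applied to the rank-one perturbation of $u^2\sigma$ by $Du\otimes Du$, using $(u^2\sigma)^{-1}=u^{-2}\sigma^{-1}$ together with $\sigma^{ij}u_iu_j/u^2=v^2-1$.

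For (iii), the convention from the excerpt's Gauss formula $X_{ij}=-h_{ij}\nu$ gives $h_{ij}=-\langle\partial_i\partial_j X,\nu\rangle$. Twice differentiating $X=ux$ yields
\[
\partial_i\partial_j X=(\partial_i\partial_j u)\,x+u_i\,\partial_j x+u_j\,\partial_i x+u\,\partial_i\partial_j x,
\]
and the Gauss formula for the unit sphere gives $\partial_i\partial_j x=-\sigma_{ij}\,x+\widetilde\Gamma_{ij}^k\,\partial_k x$, where $\widetilde\Gamma$ denotes the Christoffel symbols of $\sigma$. Pairing with $\nu$ --- using $\langle x,\nu\rangle=1/v$ and $\langle\partial_j x,\nu\rangle=-u_j/(uv)$ --- the pure partial derivatives combine with the Christoffel terms into the covariant Hessian $u_{ij}=\partial_i\partial_j u-\widetilde\Gamma_{ij}^k u_k$, yielding the claimed expression for $h_{ij}$. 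To obtain $h^i_j$, I substitute $\varphi=\log u$ so that $u_i=u\varphi_i$, $u_{ij}=u(\varphi_{ij}+\varphi_i\varphi_j)$, and $v^2=1+|D\varphi|^2$; this rewrites $h_{ij}=(u/v)(\sigma_{ij}-\varphi_{ij}+\varphi_i\varphi_j)$ and $g^{ik}=u^{-2}\widetilde\sigma^{ik}$. Multiplying and using the key cancellation $\widetilde\sigma^{ik}(\sigma_{jk}+\varphi_j\varphi_k)=\delta^i_j$ (immediate from $\varphi^k\varphi_k=v^2-1$) collapses the product to $h^i_j=(uv)^{-1}(\delta^i_j-\widetilde\sigma^{ik}\varphi_{jk})$, and tracing gives $H$.

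For (iv), the cone $\Sigma^n$ is ruled by the rays through $\partial M^n$, so its tangent space at $p=u(x)x$ is spanned by $\partial_r$ together with the sphere-tangent directions $\sigma$-orthogonal to $\mu$. Its outward unit normal is therefore the polar-basis rescaling $\hat\mu(p)=u^{-1}\mu^i\partial_i$, which has unit norm because $u^{-2}\mu^i\mu^j\langle\partial_i,\partial_j\rangle=\mu^i\mu^j\sigma_{ij}=1$; pairing with the formula for $\nu$ from (i) yields $\langle\hat\mu(p),\nu(p)\rangle=-\mu^i u_i/(uv)$, so the condition is equivalent to $\mu^i u_i=0$. The main obstacle throughout is bookkeeping: keeping the polar-basis rescaling between $\partial_i x$ and $\partial_i$ straight, and tracking the sign convention in the Gauss formula; once these are fixed at the start, every remaining step is routine linear algebra.
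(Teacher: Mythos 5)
Your proposal is correct and takes essentially the same route as the paper, namely a direct coordinate computation of the graph's geometry in the polar decomposition of $\mathbb{R}^{n+1}$; the paper's proof is terser (it expands $\overline{\nabla}_{X_i}X_j$ in the ambient polar frame and defers the bookkeeping to \cite{Ch16}), whereas you differentiate the embedding $X=ux$ in the Cartesian frame and invoke the Gauss formula for $\mathbb{S}^n$, but these are two presentations of the same calculation. The key identities you use --- $\langle x,\nu\rangle=1/v$, $\langle\partial_j x,\nu\rangle=-u_j/(uv)$, and the cancellation $\widetilde\sigma^{ik}(\sigma_{jk}+\varphi_j\varphi_k)=\delta^i_j$ --- all check out.
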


\begin{proof}
Since
\begin{eqnarray*}
h_{ij}=-\langle \overline{\nabla}_{ij}X, \nu\rangle
=-\langle \overline{\nabla}_{\partial_i}\partial_{j}
+u_i\overline{\nabla}_{\partial_j}\partial_{r}+u_j\overline{\nabla}_{\partial_i}\partial_{r}+u_iu_j \overline{\nabla}_{\partial_r}\partial_{r}, \nu\rangle,
\end{eqnarray*}
these formulas can be verified by direct calculation. The details
can also be found in \cite{Ch16}.
\end{proof}

Using techniques as in Ecker \cite{Eck}, (see also \cite{Ge90,
Ge3, Mar}), the problem \eqref{Eq} is degenerated into solving the
following scalar equation with the corresponding initial data
\begin{equation}\label{Eq-}
\left\{
\begin{aligned}
&\frac{\partial u}{\partial t}=\frac{v}{u^{\alpha}H} \qquad
&&~\mathrm{in}~
M^n\times(0,\infty)\\
&\nabla_{\mu} u=0  \qquad&&~\mathrm{on}~ \partial M^n\times(0,\infty)\\
&u(\cdot,0)=u_{0} \qquad &&~\mathrm{in}~M^n.
\end{aligned}
\right.
\end{equation}
By Lemma \ref{lemma2-1}, define a new function $\varphi(x,t)=\log
u(x, t)$ and then the mean curvature can be rewritten as
\begin{eqnarray*}
H=\sum_{i=1}^{n}h^i_i=\frac{e^{-\varphi}}{
v}\bigg(n-(\sigma^{ij}-\frac{\varphi^{i}\varphi^{j}}{v^{2}})\varphi_{ij}\bigg).
\end{eqnarray*}
Hence, the evolution equation in \eqref{Eq-} can be rewritten as
\begin{eqnarray*}
\frac{\partial}{\partial t}\varphi=e^{-\alpha \varphi}
(1+|D\varphi|^2)\frac{1} {[n-(\sigma^{ij}-\frac{\varphi^{i}
\varphi^{j}}{v^2})\varphi_{ij}]}:=Q(\varphi, D\varphi, D^2\varphi).
\end{eqnarray*}
In particular,
 \begin{eqnarray*}
n-(\sigma^{ij}-\frac{\varphi_0^{i}
\varphi_0^{j}}{v^2})\varphi_{0,ij}
 \end{eqnarray*}
 is positive on $M^n$, since $M_0$ is strictly mean convex.
Thus, the problem \eqref{Eq} is again reduced to solve the following
scalar equation with the NBC
\begin{equation}\label{Evo-1}
\left\{
\begin{aligned}
&\frac{\partial \varphi}{\partial t}=Q(\varphi, D\varphi,
D^{2}\varphi) \quad
&& \mathrm{in} ~M^n\times(0,T)\\
&\nabla_{\mu} \varphi =0  \quad && \mathrm{on} ~ \partial M^n\times(0,T)\\
&\varphi(\cdot,0)=\varphi_{0} \quad && \mathrm{in} ~ M^n,
\end{aligned}
\right.
\end{equation}
where
$$n-(\sigma^{ij}-\frac{\varphi_0^{i}
\varphi_0^{j}}{v^2})\varphi_{0,ij}$$ is positive on $M^n$. Clearly,
for the initial surface $M_0$,
$$\frac{\partial Q}{\partial \varphi_{ij}}\Big{|}_{\varphi_0}=\frac{1}{u^{2+\alpha}H^{2}}(\sigma^{ij}-\frac{\varphi_0^{i}
\varphi_0^{j}}{v^2})$$ is positive on $M^n$. Based on the above
facts, as in \cite{Ge90, Ge3, Mar}, we can get the following
short-time existence and uniqueness for the parabolic system
\eqref{Eq}.

\begin{lemma}
Let $X_0(M^n)=M_{0}^{n}$ be as in Theorem \ref{main1.1}. Then there
exist some $T>0$, a unique solution  $u \in
C^{2+\gamma,1+\frac{\gamma}{2}}(M^n\times [0,T])
\cap C^{\infty}(M^n \times (0,T])$, where
$\varphi(x,t)=\log u(x,t)$, to the parabolic system \eqref{Evo-1}
with the matrix
 \begin{eqnarray*}
n-(\sigma^{ij}-\frac{\varphi^{i} \varphi^{j}}{v^2})\varphi_{ij}
 \end{eqnarray*}
positive on $M^n$. Thus there exists a unique map $\psi:
M^n\times[0,T]\rightarrow M^n$ such that $\psi(\partial M^n
,t)=\partial M^n$ and the map $\widehat{X}$ defined by
\begin{eqnarray*}
\widehat{X}: M^n\times[0,T)\rightarrow \mathbb{R}^{n+1}:
(x,t)\mapsto X(\psi(x,t),t)
\end{eqnarray*}
has the same regularity as stated in Theorem \ref{main1.1} and is
the unique solution to the parabolic system \eqref{Eq}.
\end{lemma}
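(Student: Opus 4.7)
The plan is to reduce the geometric problem \eqref{Eq} to the quasilinear scalar parabolic Neumann problem \eqref{Evo-1}, apply classical short-time existence theory for such problems, and then convert the scalar solution back to the parametric flow by a tangential reparametrization on $M^n$ that preserves $\partial M^n$.

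For the scalar problem I would first verify the two standard prerequisites. Strict parabolicity at $t=0$ is essentially free: the identity
\[
\frac{\partial Q}{\partial \varphi_{ij}}\Big|_{\varphi_0}=\frac{1}{u^{2+\alpha}H^{2}}\Big(\sigma^{ij}-\frac{\varphi_0^{i}\varphi_0^{j}}{v^{2}}\Big)
\]
already displayed in the excerpt, together with Lemma \ref{lemma2-1}(iii) (which identifies the bracket with $\widetilde{\sigma}^{ij}$, positive definite) and the strict mean convexity of $M_0^n$ (which keeps $H$ bounded away from zero), yields a uniformly positive definite coefficient matrix, hence strict parabolicity in a $C^{2,\gamma}$-neighborhood of $\varphi_0$; strict obliqueness of the boundary operator $\nabla_\mu$ is automatic because $\mu$ is the outward conormal to $\partial M^n$ in $M^n$. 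Zeroth-order compatibility of the initial datum is Lemma \ref{lemma2-1}(iv): the hypothesis $\langle \mu\circ X_0,\nu_0\circ X_0\rangle|_{\partial M^n}=0$ is exactly $\mu^{i}u_{0,i}=0$ on $\partial M^n$, i.e. $\nabla_\mu \varphi_0=0$. With these checks in place, the standard Solonnikov-type short-time existence theorem for quasilinear parabolic equations with an oblique boundary condition on manifolds with boundary (as used in \cite{Ge90,Ge3,Mar}) provides, via linearization and a Banach fixed-point argument in $C^{2+\gamma,1+\gamma/2}(M^n\times[0,T])$, a unique solution $\varphi$ on some short time interval $[0,T]$. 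Iterating interior and boundary Schauder estimates then upgrades $\varphi$ to $C^\infty(M^n\times(0,T])$, and the matrix $n-(\sigma^{ij}-\varphi^{i}\varphi^{j}/v^{2})\varphi_{ij}$ stays positive on $[0,T]$ after possibly shrinking $T$, being a continuous function of $(\varphi,D\varphi,D^{2}\varphi)$ positive at $t=0$.

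Having constructed the scalar $u=e^{\varphi}$, I would produce the geometric map $\widehat{X}$ by a tangential reparametrization. The graph $X(x,t)=(x,u(x,t))$ satisfies \eqref{Eq-} and therefore has the correct normal velocity $\nu/(|X|^{\alpha}H)$, but the time derivative $\partial_t X$ also carries a tangential component $\tau(x,t)$; I would define $\psi(\cdot,t):M^n\to M^n$ as the flow of the time-dependent tangent vector field $-\tau$, starting from the identity. The crucial point is that the Neumann condition $\mu^{i}u_{i}=0$ forces $\tau$ to lie in the tangent bundle of $\partial M^n$ at every boundary point, so the ODE preserves $\partial M^n$ and yields a family of diffeomorphisms with $\psi(\partial M^n,t)=\partial M^n$ and with regularity matching that of $u$; setting $\widehat{X}(x,t):=X(\psi(x,t),t)$ then solves \eqref{Eq}, and uniqueness of $\widehat{X}$ is inherited from uniqueness of the scalar solution and of the reparametrization ODE. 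The main obstacle I anticipate is making the standard parabolic H\"older machinery genuinely apply up to $\partial M^n\times[0,T]$: this depends on the strict obliqueness of $\nabla_\mu$ and on having enough regularity of the initial datum to meet the compatibility conditions required for $C^{2+\gamma,1+\gamma/2}$ solutions. Everything else is essentially a transcription of the familiar Gerhardt-Urbas-Marquardt reduction.
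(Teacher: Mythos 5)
Your proposal is correct and follows essentially the same route as the paper, which simply defers the short-time existence to the quasilinear-parabolic Neumann machinery of Gerhardt and Marquardt (\cite{Ge90,Ge3,Mar}) after establishing parabolicity and the compatibility condition via Lemma \ref{lemma2-1}; you have in fact spelled out the standard steps (parabolicity from the displayed $\partial Q/\partial\varphi_{ij}$ identity, obliqueness of $\nabla_\mu$, zeroth-order compatibility from $\mu^i u_{0,i}=0$, Solonnikov/Schauder short-time existence, and the Ecker-style tangential reparametrization with $\tau$ tangent to $\partial M^n$ by the Neumann condition) in more detail than the paper itself does.
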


Let $T^{\ast}$ be the maximal time such that there exists some
 \begin{eqnarray*}
u\in C^{2+\gamma,1+\frac{\gamma}{2}}(M^n\times[0,T^{\ast}))\cap
C^{\infty}(M^n\times(0,T^{\ast}))
 \end{eqnarray*}
  which solves \eqref{Evo-1}. In the
sequel, we shall prove a priori estimates for those admissible
solutions on $[0,T]$ where $T<T^{\ast}$.

\section{$C^0$, $\dot{\varphi}$ and gradient estimates}

\begin{lemma}[\bf$C^0$ estimate]\label{lemma3.1}
Let $\varphi$ be a solution of \eqref{Evo-1}. Then for $\alpha>0$,
we have
\begin{equation*}
c_1\leq u(x, t) \Theta^{-1}(t, c) \leq c_2, \qquad\quad \forall~
x\in M^n, \ t\in[0,T],
\end{equation*}
for some positive constants $c_{1}$, $c_{2}$, where $\Theta(t,
c):=\left\{\frac{\alpha t}{n}+e^{\alpha
c}\right\}^{\frac{1}{\alpha}}$ with
 \begin{eqnarray*}
\inf_{M^n}\varphi(\cdot,0)\leq c\leq \sup_{M^n} \varphi(\cdot,0).
\end{eqnarray*}
\end{lemma}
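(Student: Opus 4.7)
The plan is to sandwich $\varphi=\log u$ between two spatially homogeneous solutions of \eqref{Evo-1} and to deduce the stated ratio bounds from an elementary monotonicity estimate within the one-parameter family $\Theta(\cdot,c)$. The first observation is that for any constant $c$, the function $\phi(t):=\log\Theta(t,c)=\frac{1}{\alpha}\log(\alpha t/n+e^{\alpha c})$ satisfies the ODE $\phi'(t)=e^{-\alpha\phi(t)}/n$; since $D\phi\equiv 0$ forces $v\equiv 1$ and $\phi_{ij}\equiv 0$, this is precisely $Q(\phi,0,0)$, while $\nabla_\mu\phi\equiv 0$ on $\partial M^n$ holds trivially. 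Hence $\phi$ is a space-independent classical solution of \eqref{Evo-1}, corresponding geometrically to an evolving round sphere centered at the cone's vertex, which meets $\Sigma^n$ orthogonally by construction.

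Setting $c_+:=\sup_{M^n}\varphi(\cdot,0)$, $c_-:=\inf_{M^n}\varphi(\cdot,0)$, and $\phi_\pm(t):=\log\Theta(t,c_\pm)$, the heart of the argument will be the two-sided comparison $\phi_-\le\varphi\le\phi_+$ on $M^n\times[0,T]$. I would apply the parabolic maximum principle to $w:=\varphi-\phi_+$, the lower bound being symmetric. Because $\partial Q/\partial\varphi_{ij}$ is positive-definite on admissible solutions (as noted just after \eqref{Evo-1}), the mean-value theorem shows that $w$ satisfies a linear uniformly parabolic equation $\partial_t w=\mathcal L w$ with bounded coefficients; at $t=0$ one has $w\le 0$, and since $\phi_+$ is $x$-independent, the NBC on $\varphi$ transfers to $\nabla_\mu w=0$ on $\partial M^n\times(0,T]$. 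At an interior positive maximum of $w$, the conditions $D\varphi=0$ and $D^2\varphi\le 0$ force the denominator in $Q$ to be at least $n$, whence
\[
\partial_t\varphi\le\frac{e^{-\alpha\varphi}}{n}<\frac{e^{-\alpha\phi_+}}{n}=\phi_+',
\]
contradicting $\partial_t w\ge 0$.

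The main obstacle will be the boundary case. At a positive maximum of $w$ on $\partial M^n\times(0,T]$, the NBC yields $D\varphi=0$ at the point but provides no sign on the normal-normal entry of $D^2\varphi$, so the direct interior argument breaks down. I plan to handle this via a standard Hopf-type perturbation: replace $w$ by $w_\epsilon:=w-\epsilon(1+t)$, which is a \emph{strict} linear subsolution, $\partial_t w_\epsilon-\mathcal L w_\epsilon=-\epsilon<0$, still satisfies $\nabla_\mu w_\epsilon=0$ on $\partial M^n$, and starts strictly below zero. Interior maxima of $w_\epsilon$ are ruled out by the same computation as above, now with room to spare. For a boundary maximum, Hopf's boundary-point lemma applied to $\partial_t-\mathcal L$ forces $\nabla_\mu w_\epsilon>0$ there, contradicting the Neumann condition. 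Sending $\epsilon\to 0^+$ gives $w\le 0$, i.e., $\varphi\le\phi_+$; the lower bound follows identically.

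Once $\Theta(t,c_-)\le u(x,t)\le\Theta(t,c_+)$ is in hand, the estimate for $u\,\Theta^{-1}(t,c)$ reduces to elementary monotonicity: for constants $A,B>0$ the function $t\mapsto(\alpha t/n+A)/(\alpha t/n+B)$ is monotone and converges to $1$ as $t\to\infty$, so its values lie between $1$ and $A/B$. Applied to the pairs $(A,B)=(e^{\alpha c_+},e^{\alpha c})$ and $(e^{\alpha c},e^{\alpha c_-})$, this yields
\[
e^{-(c_+-c_-)}\le \frac{\Theta(t,c_-)}{\Theta(t,c)}\le 1 \le \frac{\Theta(t,c_+)}{\Theta(t,c)}\le e^{c_+-c_-},
\]
so one may take $c_1=e^{-(c_+-c_-)}$ and $c_2=e^{c_+-c_-}$, completing the proof.
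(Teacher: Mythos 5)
Your proposal is correct and follows the same route as the paper: sandwich $\varphi$ between the spatially homogeneous barriers $\log\Theta(t,c_\pm)$ with $c_\pm=\sup/\inf\varphi(\cdot,0)$, apply the parabolic maximum principle with Neumann boundary condition, and then deduce the ratio bounds from the explicit form of $\Theta$. The paper simply invokes ``the maximum principle'' for the comparison step, whereas you spell out the strict-subsolution perturbation and the Hopf boundary-point argument needed to handle the Neumann boundary; this extra detail is in the spirit of the cited reference \cite{Mar} and is consistent with the paper's intent.
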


\begin{proof}
Let $\varphi(x, t)=\varphi(t)$ (independent of $x$) be  the solution
of \eqref{Evo-1} with $\varphi(0)=c$. In this case, the first
equation in \eqref{Evo-1} reduces to an ODE
\begin{eqnarray*}
\frac{d}{d t}\varphi=e^{-\alpha\varphi}\frac{1}{n}.
\end{eqnarray*}
Therefore,
\begin{eqnarray}\label{blow}
\varphi(t)=\frac{1}{\alpha}\ln\left(\frac{\alpha t}{n}+e^{\alpha
c}\right), \ \ \mathrm{for}\ \ \alpha>0.
\end{eqnarray}
Using the maximum principle, we can obtain that
\begin{equation}\label{C^0}
\frac{1}{\alpha}\ln\left(\frac{\alpha}{n}t+e^{\alpha\varphi_1}\right)\leq\varphi(x,
t)
\leq\frac{1}{\alpha}\ln\left(\frac{\alpha t}{n}+e^{\alpha\varphi_2}\right),
\end{equation}
where $\varphi_1:=\inf_{M^n}\varphi(\cdot,0)$ and
$\varphi_2:=\sup_{M^n} \varphi(\cdot,0)$. The estimate is obtained
since $\varphi =\log u$.
\end{proof}

\begin{lemma}[\bf$\dot{\varphi}$ estimate]\label{lemma3.2}
Let $\varphi$ be a solution of
\eqref{Evo-1} and $\Sigma^n$ be a smooth, convex cone, then
 for $\alpha>0$,
\begin{eqnarray*}
\min\left\{\inf_{M^n}\dot{\varphi}(\cdot, 0)\cdot\Theta(0)^{\alpha},
\frac{1}{n}\right\} \leq \dot{\varphi}(x, t)\Theta(t)^{\alpha}\leq
\max\left\{\sup_{M^{n}}\dot{\varphi}(\cdot,
0)\cdot\Theta(0)^{\alpha}, \frac{1}{n}\right\}.
\end{eqnarray*}
\end{lemma}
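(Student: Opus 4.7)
\medskip

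\noindent\textbf{Proof proposal.} The plan is to compare $\dot\varphi$ with the spatially constant ODE solution obtained in Lemma~\ref{lemma3.1} and run a parabolic maximum principle on a normalized auxiliary quantity. The crucial observation is that for the ODE solution $\bar\varphi(t)=\frac{1}{\alpha}\ln(\frac{\alpha t}{n}+e^{\alpha c})$ one has $\dot{\bar\varphi}\equiv \frac{1}{n}e^{-\alpha\bar\varphi}$, and $e^{-\alpha\bar\varphi}=\Theta(t)^{-\alpha}$, so $\dot{\bar\varphi}\cdot\Theta(t)^{\alpha}\equiv 1/n$. This identifies $1/n$ as the stationary value of the quantity
\[
w(x,t):=\dot\varphi(x,t)\cdot\Theta(t)^{\alpha},
\]
and the desired inequalities assert exactly that $w$ is sandwiched between $1/n$ and its initial extrema.

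Next, I would differentiate the scalar equation $\varphi_t=Q(\varphi,D\varphi,D^2\varphi)$ in $t$. Because $Q$ depends on $\varphi$ only through the factor $e^{-\alpha\varphi}$, we have $Q_\varphi=-\alpha Q$, so, writing $L:=Q_{\varphi_{ij}}\nabla_{ij}+Q_{\varphi_i}\nabla_i$ for the spatial linearization and using that covariant space derivatives of scalars commute with $\partial_t$,
\[
(\dot\varphi)_t=L(\dot\varphi)-\alpha\,\dot\varphi^{\,2}.
\]
Since $\Theta^{\alpha}$ is $x$-independent, $L(w)=\Theta^{\alpha}L(\dot\varphi)$, and $(\Theta^{\alpha})'=\alpha/n$. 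A direct computation then yields the linear parabolic equation
\[
w_t-L(w)+\alpha\,\dot\varphi\,w=\frac{\alpha\,\dot\varphi}{n}\qquad\text{in }M^n\times(0,T),
\]
while differentiating the NBC $\nabla_\mu\varphi=0$ in $t$ (the vector field $\mu$ on $\partial M^n\subset\mathbb{S}^n$ being time-independent) gives $\nabla_\mu\dot\varphi=0$, hence $\nabla_\mu w=0$ on $\partial M^n\times(0,T)$.

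For the upper bound, take the constant supersolution $\bar w\equiv C:=\max\{\sup_{M^n}w(\cdot,0),\,1/n\}$. A short check shows
\[
\bar w_t-L(\bar w)+\alpha\,\dot\varphi\,\bar w-\frac{\alpha\,\dot\varphi}{n}=\alpha\,\dot\varphi\bigl(C-\tfrac{1}{n}\bigr)\geq 0,
\]
provided $\dot\varphi\geq 0$. Since $w(\cdot,0)\leq\bar w$ and $\nabla_\mu(w-\bar w)=0$, the parabolic maximum principle applied to $w-\bar w$, together with the Hopf boundary point lemma to exclude a positive maximum on $\partial M^n$, gives $w\leq\bar w$. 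The lower bound follows symmetrically from the constant subsolution $\underline w\equiv\min\{\inf_{M^n}w(\cdot,0),\,1/n\}$.

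The main obstacle is justifying the sign hypothesis $\dot\varphi\geq 0$ used in the above maximum principle (it is needed to ensure the zeroth-order coefficient $\alpha\dot\varphi$ is nonnegative). This I would handle by a bootstrap: strict mean convexity of $M_0$ together with $\dot\varphi(\cdot,0)=v_0/(u_0^{\alpha}H_0)$ gives $\inf_{M^n}w(\cdot,0)>0$, and setting
\[
T_\ast:=\sup\{t\in[0,T]:\dot\varphi>0\text{ on }M^n\times[0,t]\},
\]
one has $T_\ast>0$ by continuity; on $[0,T_\ast)$ the lower-bound argument yields $w\geq \underline w>0$, hence $\dot\varphi\geq \underline w/\Theta(t)^{\alpha}>0$ up to and including $T_\ast$, forcing $T_\ast=T$. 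The one other delicate point is the clean application of the Hopf lemma at $\partial M^n$, which is legitimate because $\partial M^n$ is smooth (the cone is smooth) and $\mu$ is transverse to it, so the interior sphere condition holds and the usual boundary point argument goes through.
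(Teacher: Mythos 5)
Your proposal is correct and takes essentially the same route as the paper: set $\mathcal{M}=\dot\varphi\,\Theta^{\alpha}$, derive its parabolic evolution equation with the zero-order term $\alpha\Theta^{-\alpha}(\tfrac1n-\mathcal{M})\mathcal{M}$, preserve the Neumann condition, and conclude by the maximum principle. The paper compresses this into ``the result follows from the maximum principle''; you fill in the comparison with constants and the Hopf boundary argument, and you add a bootstrap to keep $\dot\varphi>0$, which is a valid (if slightly redundant) way to handle the sign of the zero-order coefficient — one could instead note that $\min\{\inf\mathcal{M}_0,1/n\}>0$ by strict mean convexity, so the lower-bound comparison already keeps $\mathcal{M}$, hence $\dot\varphi$, positive; also note the small slip that $\dot\varphi_0=v_0/(u_0^{\alpha+1}H_0)$, not $v_0/(u_0^{\alpha}H_0)$, though this does not affect positivity.
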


\begin{proof}
Set
\begin{eqnarray*}
\mathcal{M}(x,t)=\dot{\varphi}(x, t)\Theta(t)^{\alpha}.
\end{eqnarray*}
Differentiating both sides of the first evolution equation of
\eqref{Evo-1}, it is easy to get that
\begin{equation} \label{3.4}
\left\{
\begin{aligned}
&\frac{\partial\mathcal{M}}{\partial t}=
Q^{ij}D_{ij}\mathcal{M}+Q^{k}D_k \mathcal{M}+\alpha
\Theta^{-\alpha}\left(\frac{1}{n}-\mathcal{M}\right)\mathcal{M}
\quad
&& \mathrm{in} ~M^n\times(0,T)\\
&\nabla_{\mu}\mathcal{M}=0 \quad && \mathrm{on} ~\partial M^n\times(0,T)\\
&\mathcal{M}(\cdot,0)=\dot{\varphi}_0\cdot\Theta(0)^{\alpha} \quad
&& \mathrm{on} ~ M^n,
\end{aligned}
\right.
\end{equation}
where $Q^{ij}:=\frac{ \partial Q}{\partial \varphi_{ij}}$
 and $Q^k:=\frac{ \partial Q}{\partial \varphi_{k}}$.
Then the result follows from the maximum principle.
\end{proof}

\begin{lemma}[\bf Gradient estimate]\label{Gradient}
Let $\varphi$ be a solution of \eqref{Evo-1} and $\Sigma^n$ be a
smooth, convex cone described as in Theorem \ref{main1.1}. Then we
have for $\alpha>0$,
\begin{equation}\label{Gra-est}
|D\varphi|\leq \sup_{M^n}|D\varphi(\cdot, 0)|, \qquad\quad \forall~
x\in M^n, \ t\in[0,T].
\end{equation}
\end{lemma}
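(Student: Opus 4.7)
The plan is to apply the parabolic maximum principle to the auxiliary scalar $\psi := \tfrac{1}{2}|D\varphi|^2$ on $M^n\times[0,T]$ and show that its spatial supremum is non-increasing in $t$. I would differentiate $\partial_t\varphi = Q(\varphi, D\varphi, D^2\varphi)$ once, contract with $\varphi^j$ to obtain $\partial_t\psi$, and separately expand $Q^{ij}D_iD_j\psi$. The two sets of cubic derivative terms differ by a commutator; invoking the Ricci identity on $\mathbb{S}^n$ with $R_{ijkl}=\sigma_{ik}\sigma_{jl}-\sigma_{il}\sigma_{jk}$ reduces this commutator to $Q^{ij}\varphi_i\varphi_j - 2\psi\,\sigma_{ij}Q^{ij}$. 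Together with $Q_\varphi = -\alpha Q$, one arrives at the schematic identity
\[
\partial_t\psi - Q^{ij}D_iD_j\psi - Q^k D_k\psi = -2\alpha Q\psi + Q^{ij}\varphi_i\varphi_j - 2\psi\,\sigma_{ij}Q^{ij} - Q^{ij}\sigma^{kl}\varphi_{li}\varphi_{kj}.
\]

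The crucial algebraic step is to exploit the explicit form $Q^{ij}=(u^{2+\alpha}H^2)^{-1}\widetilde\sigma^{ij}$ together with $v^2=1+2\psi$; a short computation collapses the sphere-curvature contribution to
\[
Q^{ij}\varphi_i\varphi_j - 2\psi\,\sigma_{ij}Q^{ij} = -\frac{2(n-1)\psi}{u^{2+\alpha}H^2} \le 0.
\]
The remaining terms $-2\alpha Q\psi$ and $-Q^{ij}\sigma^{kl}\varphi_{li}\varphi_{kj}$ are both non-positive (the latter is the contraction of two positive semidefinite symmetric tensors in $(i,j)$), so the right-hand side of the parabolic equation is $\le 0$ throughout the interior, and the linear maximum principle rules out any new interior spatial maximum of $\psi$.

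For a spatial maximum occurring on $\partial M^n$, I would differentiate the Neumann condition $\mu^k\varphi_k = 0$ tangentially. Because $\varphi^\sharp = \varphi^a\partial_a$ is $\sigma$-orthogonal to $\mu$ and hence tangent to $\partial M^n$, contracting the tangential derivative with $\varphi^a$ produces
\[
\nabla_\mu\psi = \mu^i\varphi^j\varphi_{ij} = -\varphi^a\varphi_k\, D_a\mu^k = II(\varphi^\sharp,\varphi^\sharp),
\]
where $II$ denotes the second fundamental form of $\partial M^n\subset\mathbb S^n$ with respect to the outward conormal $\mu$. Since $\mu$ is tangent to $\mathbb{S}^n$ and orthogonal to the radial position vector $x\in\Sigma^n$, the convexity of the cone in $\mathbb R^{n+1}$ descends to $II\le 0$ in this sign convention, giving $\nabla_\mu\psi\le 0$ on $\partial M^n$. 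Hopf's boundary lemma then excludes a strict maximum at a point of $\partial M^n\times(0,T]$, and the standard parabolic maximum principle with vanishing Neumann data delivers $\max_{M^n}\psi(\cdot,t)\le\max_{M^n}\psi(\cdot,0)$, which is exactly \eqref{Gra-est}.

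The main obstacle is the interior computation: on the positively curved $\mathbb S^n$ the Ricci identity generates a commutator whose sign is a priori unfavorable, and the estimate closes only because the explicit structure $Q^{ij}=(u^{2+\alpha}H^2)^{-1}(\sigma^{ij}-\varphi^i\varphi^j/v^2)$ together with $v^2=1+2\psi$ collapses this commutator precisely to $-2(n-1)\psi/(u^{2+\alpha}H^2)$; without this algebraic cancellation one would have to introduce a weighted test function or an auxiliary barrier to absorb the curvature terms.
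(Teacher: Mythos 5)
Your proof is correct and follows the same route as the paper: define $\psi=\tfrac12|D\varphi|^2$, derive the parabolic equation for $\psi$ via the Ricci identity, argue that the curvature commutator, the Hessian term $-Q^{ij}\varphi_{mi}\varphi^m_j$, and the zeroth-order term $-2\alpha Q\psi$ are all $\le 0$, use the convexity of the cone (as in Marquardt's Lemma~5) to get $\nabla_\mu\psi\le 0$ on $\partial M^n$, and close with the maximum principle.

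One small point of emphasis: the paper handles the curvature commutator abstractly, noting that $Q^{ij}\bigl(\sigma_{ij}|D\varphi|^2-\varphi_i\varphi_j\bigr)\ge 0$ simply because $Q^{ij}$ is positive definite and $\sigma_{ij}|D\varphi|^2-\varphi_i\varphi_j$ is positive semidefinite by Cauchy--Schwarz; your explicit evaluation $Q^{ij}\varphi_i\varphi_j-2\psi\,\sigma_{ij}Q^{ij}=-2(n-1)\psi/(u^{2+\alpha}H^2)$ is a correct (and more quantitative) refinement, but the explicit form of $Q^{ij}$ is not actually needed to close the estimate, so the commutator's sign is not ``a priori unfavorable'' in the way your last paragraph suggests.
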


\begin{proof}
Set $\psi=\frac{|D \varphi|^2}{2}$. By differentiating  $\psi$, we
have
\begin{equation*}
\begin{aligned}
\frac{\partial \psi}{\partial t}
=\frac{\partial}{\partial t}\varphi_m \varphi^m
= \dot{\varphi}_m\varphi^m
=Q_m \varphi^m.
\end{aligned}
\end{equation*}
Then using the evolution equation of $\varphi$ in (\ref{Evo-1})
yields
\begin{eqnarray*}
\frac{\partial \psi}{\partial t}=Q^{ij}\varphi_{ijm} \varphi^m
+Q^k\varphi_{km} \varphi^m-\alpha Q|D \varphi|^2.
\end{eqnarray*}
Interchanging the covariant derivatives, we have
\begin{equation*}
\begin{aligned}
\psi_{ij}&=D_j(\varphi_{mi} \varphi^m)\\&=\varphi_{mij} \varphi^m+\varphi_{mi} \varphi^m_j\\
&=(\varphi_{ijm}-R^l_{jmi}\varphi_{l})\varphi^m+\varphi_{mi}\varphi^m_j.
\end{aligned}
\end{equation*}
Therefore, we can express
$\varphi_{ijm} \varphi^m$
as
\begin{eqnarray*}
\varphi_{ijm} \varphi^m
=\psi_{ij}+R^l_{jmi}\varphi_l \varphi^m-\varphi_{mi} \varphi^m_j.
\end{eqnarray*}
Then, in view of the fact
$R_{jmil}=\sigma_{ji}\sigma_{ml}-\sigma_{lj}\sigma_{im}$ on
$\mathbb{S}^n$, we have
\begin{equation}\label{gra}
\begin{aligned}
\frac{\partial \psi}{\partial t}&=Q^{ij}\psi_{ij}+Q^k \psi_k
-Q^{ij}(\sigma_{ij}|D\varphi|^2-\varphi_i \varphi_j)\\&-Q^{ij}\varphi_{mi} \varphi^{m}_{j}-\alpha Q|D \varphi|^2
.\end{aligned}
\end{equation}

Since the matrix $Q^{ij}$ is positive definite, the third and the
fourth terms in the RHS of \eqref{gra} are non-positive. Noticing
that the last term in the RHS of \eqref{gra} is also non-positive if
$\alpha>0$. Since $\Sigma^n$ is convex, using a similar argument to
the proof of \cite[Lemma 5]{Mar} (see pp. 1308) implies that
\begin{eqnarray*}
\nabla_{\mu}\psi=-\sum\limits_{i,j=1}^{n-1}h_{ij}^{\partial
M^{n}}\nabla_{e_i}\varphi\nabla_{e_j}\varphi \leq
0~~~~\qquad\mathrm{on}~\partial M^n\times(0,T),
\end{eqnarray*}
where an orthonormal frame at $x\in\partial M^{n}$, with
$e_{1},\ldots,e_{n-1}\in T_{x}\partial M^{n}$ and $e_{n}:=\mu$, has
been chosen for convenience in the calculation, and
$h_{ij}^{\partial M^{n}}$ is the second fundamental form of the
boundary $\partial M^{n}\subset\Sigma^{n}$.
 So, we can get
\begin{equation*}
\left\{
\begin{aligned}
&\frac{\partial \psi}{\partial t}\leq Q^{ij}\psi_{ij}+Q^k\psi_k
\qquad &&\mathrm{in}~
M^n\times(0,T)\\
&\nabla_{\mu} \psi \leq 0   && \mathrm{on}~\partial M^n\times(0,T)\\
&\psi(\cdot,0)=\frac{|D\varphi(\cdot,0)|^2}{2} \qquad
&&\mathrm{in}~M^n.
\end{aligned}\right .\end{equation*}
Using the maximum principle, we get the gradient estimate of
$\varphi$ in Lemma \ref{Gradient}.
\end{proof}

\begin{remark}
\rm{It is worth pointing out that  the evolving surface $M_{t}^{n}$
is always star-shaped under the assumption of Theorem \ref{main1.1},
since, by Lemma \ref{Gradient}, we have
\begin{eqnarray*}
\left\langle \frac{X}{|X|}, \nu\right\rangle=\frac{1}{v}
\end{eqnarray*}
is bounded from below by a positive constant.}
\end{remark}

Combing the gradient estimate with $\dot{\varphi}$ estimate, we can
obtain
\begin{corollary}
If $\varphi$ satisfies \eqref{Evo-1}, then we have
\begin{eqnarray}\label{w-ij}
0<c_3\leq
H\Theta
\leq c_4<+\infty,
\end{eqnarray}
where $c_3$ and $c_4$ are positive constants independent of
$\varphi$.
\end{corollary}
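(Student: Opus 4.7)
The corollary is essentially an algebraic consequence of the three estimates already established (Lemmas 3.1, 3.2, 3.3), so the plan is to express $H\Theta$ purely in terms of quantities that are already known to be pinched between positive constants.

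First I would rewrite the scalar evolution. From Lemma \ref{lemma2-1}, the mean curvature of the graph satisfies
\begin{equation*}
H = \frac{1}{uv}\Bigl(n - \Bigl(\sigma^{ij} - \frac{\varphi^i\varphi^j}{v^2}\Bigr)\varphi_{ij}\Bigr),
\end{equation*}
so the definition of $Q$ gives the clean identity
\begin{equation*}
\dot{\varphi}(x,t) = Q(\varphi, D\varphi, D^2\varphi) = \frac{v}{u^{\alpha+1} H}.
\end{equation*}
Solving for $H$ and multiplying by $\Theta(t,c)$, I would write
\begin{equation*}
H\,\Theta \;=\; \frac{v}{u^{\alpha+1}\,\dot{\varphi}} \cdot \Theta \;=\; \frac{v}{\bigl(u\,\Theta^{-1}\bigr)^{\alpha+1}\,\bigl(\dot{\varphi}\,\Theta^{\alpha}\bigr)}.
\end{equation*}
This is the decisive rearrangement: it packages $u$ against $\Theta$ and $\dot{\varphi}$ against $\Theta^{\alpha}$, which are exactly the combinations controlled by Lemmas \ref{lemma3.1} and \ref{lemma3.2}.

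Next I would invoke each estimate. By Lemma \ref{Gradient}, $|D\varphi|$ is bounded by its initial value, so $1 \leq v \leq \sqrt{1 + \sup_{M^n}|D\varphi(\cdot,0)|^2}$. By Lemma \ref{lemma3.1}, $u\,\Theta^{-1}$ lies between the two positive constants $c_1, c_2$. By Lemma \ref{lemma3.2}, $\dot{\varphi}\,\Theta^{\alpha}$ lies between two positive constants; the lower bound is strictly positive because the strict mean convexity of $M_0^n$ gives $\dot{\varphi}(\cdot,0) = v_0/(u_0^{\alpha+1} H_0) > 0$, so both $\inf_{M^n}\dot{\varphi}(\cdot,0)\Theta(0)^{\alpha}$ and $1/n$ are positive. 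Plugging the upper bound for $v$ together with the lower bounds for $u\Theta^{-1}$ and $\dot{\varphi}\Theta^{\alpha}$ into the displayed identity yields the upper bound $H\Theta \leq c_4$; plugging in $v \geq 1$ together with the upper bounds for $u\Theta^{-1}$ and $\dot{\varphi}\Theta^{\alpha}$ yields the lower bound $H\Theta \geq c_3 > 0$.

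There is no real analytic obstacle here; the work has already been done in the preceding lemmas. The only point to be careful about is checking that the lower bound in Lemma \ref{lemma3.2} is genuinely positive, which reduces to the strict mean convexity of the initial hypersurface, and that the exponents on $u\Theta^{-1}$ and $\Theta$ match up correctly so that a single factor of $\Theta$ remains on the left side. Once these bookkeeping points are verified, the conclusion \eqref{w-ij} is immediate with explicit constants expressible through $c_1, c_2$, $\sup_{M^n}|D\varphi(\cdot,0)|$, and the bounds of Lemma \ref{lemma3.2}.
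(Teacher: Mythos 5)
Your proof is correct and is exactly the computation the paper has in mind: the paper states the corollary without a written proof, saying only that it follows by combining the gradient and $\dot{\varphi}$ estimates, and your identity $H\Theta = v\bigl((u\Theta^{-1})^{\alpha+1}(\dot{\varphi}\,\Theta^{\alpha})\bigr)^{-1}$ together with Lemmas \ref{lemma3.1}, \ref{lemma3.2}, and \ref{Gradient} is the intended algebra. You also correctly flag the one point that needs a word of justification, namely that the lower bound in Lemma \ref{lemma3.2} is strictly positive thanks to strict mean convexity of $M_0^n$ and compactness.
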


\section{H\"{o}lder Estimates and Convergence}

\

Set $\Phi=\frac{1}{|X|^{\alpha}H}$, $w=\langle X, \nu\rangle$ and
$\Psi= \frac{\Phi}{w}$. We can get the following evolution
equations.

\begin{lemma}\label{EVQ}
Under the assumptions of Theorem \ref{main1.1}, we have
\begin{eqnarray*}
\frac{\partial}{\partial t}g_{ij}=2\Phi h_{ij},
\end{eqnarray*}
\begin{eqnarray*}
\frac{\partial}{\partial t}g^{ij}=-2\Phi h^{ij},
\end{eqnarray*}
\begin{equation*}
\frac{\partial}{\partial t}\nu=-\nabla \Phi,
\end{equation*}
\begin{equation*}
\begin{split}
\partial_{t}h_{i}^{j}-\Phi H^{-1}\Delta h_{i}^{j}&=\Phi H^{-1}|A|^2 h_{i}^{j}-\frac{2\Phi}{H^2}H_i H^j
-2\Phi h_{ik}h^{kj}\\
&-\alpha\Phi(\nabla_{i}\log u \nabla^{j}\log H+\nabla^{j}\log u \nabla_{i}\log H)\\
&+\alpha\Phi u^{-1} u_{i}^{j}-\alpha (\alpha+1)\Phi\nabla_{i}\log u \nabla^{j}\log u,
\end{split}
\end{equation*}
and
\begin{equation}\label{div-for-1}
\begin{split}
 \frac{\partial \Psi }{\partial t}&=\mbox{div}_g (u^{-\alpha} H^{-2} \nabla \Psi)-2H^{-2} u^{-\alpha} \Psi^{-1} |\nabla \Psi|^2\\
 &-\alpha \Psi^2- \alpha \Psi^2 u^{-1 }\nabla^i u \langle X, X_i \rangle- \alpha u^{-\alpha-1} H^{-2} \nabla_iu \nabla^i\Psi.
 \end{split}
\end{equation}
\end{lemma}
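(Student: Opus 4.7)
The plan is to establish the five identities in the order they appear, since the first three serve as building blocks for the rest. The metric, inverse-metric, and normal evolutions hold for any Euclidean normal flow $\partial_{t}X=\Phi\nu$ regardless of the form of $\Phi$, so I would handle them first. Differentiating $g_{ij}=\langle X_{i},X_{j}\rangle$ in $t$ and invoking the Weingarten formula \eqref{Wein for} gives $\partial_{t}g_{ij}=2\Phi h_{ij}$ directly; the formula for $\partial_{t}g^{ij}$ comes from differentiating $g^{ij}g_{jk}=\delta^{i}_{k}$. For $\partial_{t}\nu$ I would write $\partial_{t}\nu=a^{k}X_{k}$ (legitimate because $|\nu|=1$), differentiate $\langle\nu,X_{i}\rangle=0$ in $t$, and use $\partial_{t}X_{i}=\partial_{i}(\Phi\nu)$ to identify $a^{k}=-\nabla^{k}\Phi$.

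For $\partial_{t}h^{j}_{i}$ the derivation splits naturally into two stages. First, differentiate $h_{ij}=-\langle\partial_{i}\partial_{j}X,\nu\rangle$ in $t$, expand $\partial_{i}\partial_{j}(\Phi\nu)$ by Leibniz, and use \eqref{Gauss for}--\eqref{Wein for} together with $\partial_{t}\nu=-\nabla\Phi$; this produces the standard Euclidean identity $\partial_{t}h_{ij}=-\nabla_{i}\nabla_{j}\Phi+\Phi h_{ik}h^{k}_{j}$, and raising an index (using $\partial_{t}g^{jk}=-2\Phi h^{jk}$) yields $\partial_{t}h^{j}_{i}=-\nabla_{i}\nabla^{j}\Phi-\Phi h_{ik}h^{kj}$. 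Second, tracing the Simons-type identity \eqref{2rd} with $g^{kl}$ gives $\Delta h^{j}_{i}=\nabla_{i}\nabla^{j}H+Hh_{ik}h^{kj}-|A|^{2}h^{j}_{i}$, so that $\partial_{t}h^{j}_{i}-\Phi H^{-1}\Delta h^{j}_{i}$ immediately produces the claimed terms $\Phi H^{-1}|A|^{2}h^{j}_{i}-2\Phi h_{ik}h^{kj}$ and leaves the residue $-\nabla_{i}\nabla^{j}\Phi-\Phi H^{-1}\nabla_{i}\nabla^{j}H$. To finish I would substitute $\Phi=u^{-\alpha}H^{-1}$ and expand $\nabla^{2}\Phi$ by Leibniz: the pure $\nabla^{2}H$ contribution in $\nabla^{2}\Phi$ cancels exactly with the $-\Phi H^{-1}\nabla^{2}H$ piece of the residue, and the surviving terms are precisely $-2\Phi H^{-2}H_{i}H^{j}$, the symmetric cross term $-\alpha\Phi(\nabla_{i}\log u\,\nabla^{j}\log H+\nabla^{j}\log u\,\nabla_{i}\log H)$, the Hessian term $\alpha\Phi u^{-1}u^{j}_{i}$, and the quadratic $-\alpha(\alpha+1)\Phi\nabla_{i}\log u\,\nabla^{j}\log u$.

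For the divergence-form equation for $\Psi=\Phi/w$ with $w=\langle X,\nu\rangle$, my plan is to compute $\partial_{t}\Phi$ and $\partial_{t}w$ separately and then assemble $\partial_{t}\Psi=(\partial_{t}\Phi)/w-\Psi(\partial_{t}w)/w$. From the scalar formulation \eqref{Eq-} one has $\partial_{t}u=v\Phi$, while tracing the formula for $\partial_{t}h_{ij}$ with $g^{ij}$ (and accounting for $\partial_{t}g^{ij}$) produces the standard $\partial_{t}H=-\Delta\Phi-\Phi|A|^{2}$. Substituting both into the definition $\Phi=u^{-\alpha}H^{-1}$ gives $\partial_{t}\Phi=u^{-\alpha}H^{-2}(\Delta\Phi+\Phi|A|^{2})-\alpha v u^{-1}\Phi^{2}$. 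For $w$, direct differentiation of $\langle X,\nu\rangle$ in $t$ combined with $\partial_{t}X=\Phi\nu$ and $\partial_{t}\nu=-\nabla\Phi$ yields $\partial_{t}w=\Phi-\langle X,\nabla\Phi\rangle$. Then, substituting $\nabla\Phi=w\nabla\Psi+\Psi\nabla w$ and carefully grouping the second-order pieces, the leading operator is recognized as $\mathrm{div}_{g}(u^{-\alpha}H^{-2}\nabla\Psi)$, the quadratic error from $|\nabla\Phi|^{2}$-type terms collapses to $-2H^{-2}u^{-\alpha}\Psi^{-1}|\nabla\Psi|^{2}$, and the $\alpha$-dependent terms produced by the non-scale-invariance regroup into the remaining three summands $-\alpha\Psi^{2}$, $-\alpha\Psi^{2}u^{-1}\nabla^{i}u\,\langle X,X_{i}\rangle$, and $-\alpha u^{-\alpha-1}H^{-2}\nabla_{i}u\,\nabla^{i}\Psi$.

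The main obstacle I anticipate is precisely this last reorganization. The diffusion coefficient $u^{-\alpha}H^{-2}$ is nonautonomous and depends on the solution through both $u$ and $H$, so expanding $\mathrm{div}_{g}(u^{-\alpha}H^{-2}\nabla\Psi)$ introduces extra cross terms of the form $H^{-3}\nabla H\cdot\nabla\Psi$ and $u^{-\alpha-1}\nabla u\cdot\nabla\Psi$ that must be matched term-by-term against the pieces arising from $\nabla\Phi=w\nabla\Psi+\Psi\nabla w$ in the formula for $\partial_{t}\Psi$. Tracking the $\alpha$-corrections cleanly through these cross terms is the bookkeeping bottleneck, but no new geometric input beyond the Gauss, Codazzi, and Ricci identities already assembled in Section 2 is needed.
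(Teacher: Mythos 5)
Your proof follows essentially the same route as the paper: the first three evolution equations by direct differentiation, then $\partial_t h_{ij}=-\nabla_i\nabla_j\Phi+\Phi h_{ik}h^k_j$, the traced Simons identity $\Delta h_{ij}=\nabla_i\nabla_j H+Hh_{ik}h^k_j-|A|^2h_{ij}$ (equivalently \eqref{2rd} contracted with $g$), and a Leibniz expansion of $\nabla^2(u^{-\alpha}H^{-1})$ for the $\alpha$-terms; all of this I have verified to be correct and identical in substance to the paper's computation. Where you differ slightly is in the organization of \eqref{div-for-1}: the paper first derives $\partial_t H$ and $\partial_t w$ in terms of $\Delta H$ and $\Delta w$, writes $\partial_t\Psi$ with the chain rule on $\Psi=u^{-\alpha}H^{-1}w^{-1}$, and then verifies the divergence identity by separately computing $\nabla^2\Psi$, $\Delta\Psi$, $\mathrm{div}(u^{-\alpha}H^{-2}\nabla\Psi)$, and $2H^{-1}w|\nabla\Psi|^2$ entirely in the variables $u,H,w$; you propose instead to work at the level of $\Phi$ via $\partial_t\Phi=u^{-\alpha}H^{-2}(\Delta\Phi+\Phi|A|^2)-\alpha v u^{-1}\Phi^2$ and the product rule $\nabla\Phi=w\nabla\Psi+\Psi\nabla w$. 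I have checked that this variant does close, and it is somewhat more conceptual, but the bookkeeping advantage is smaller than your last paragraph suggests.

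There is one genuine gap in the sketch. You write $\partial_t w=\Phi-\langle X,\nabla\Phi\rangle$ and say that substituting $\nabla\Phi=w\nabla\Psi+\Psi\nabla w$ and grouping second-order pieces exposes the divergence operator. On its own that substitution is not enough: the terms $\Phi w^{-1}\langle X,\nabla\Psi\rangle$ and $\Psi^2 w^{-1}\langle X,\nabla w\rangle$ that it produces do not simplify, and the $D\Psi|A|^2$ and lone $-\Psi^2$ terms coming from $\partial_t\Phi$ and $-\Psi w^{-1}\partial_t w$ are left uncancelled. The step that makes everything collapse is the Weingarten--Codazzi identity
\begin{equation*}
\Delta w \;=\; H + \nabla^i H\,\langle X,X_i\rangle - |A|^2 w,
\end{equation*}
obtained from $w_i=h_i^k\langle X,X_k\rangle$ and $\nabla^j h_{j}^{\ k}=\nabla^k H$ (together with $\nabla_i\nabla_j\tfrac{|X|^2}{2}=g_{ij}-w\,h_{ij}$). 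Inserting it into the $\Psi\Delta w$ piece of $\Delta\Phi$ produces the $+\Psi^2$ and $-D\Psi|A|^2$ that kill the stray terms, and the resulting $\nabla^i H\,\langle X,X_i\rangle$ contribution exactly cancels the corresponding piece of $\Psi w^{-1}\langle X,\nabla\Phi\rangle$ once you expand $\nabla\Phi=-\Phi(\alpha\,\nabla\log u+\nabla\log H)$. So you still need the explicit $u,H$ decomposition of $\nabla\Phi$ at that point, and you must invoke the $\Delta w$ identity explicitly; ``no new geometric input beyond Gauss, Codazzi, and Ricci'' is technically true since that identity follows from Weingarten and Codazzi, but as written the sketch does not record the one identity on which the whole cancellation pattern hinges, and a reader following it literally would get stuck.
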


\begin{proof}
The  first three evolution equations are easy to obtain and so are omitted.
Using the Gauss formula, we have
\begin{equation*}
\begin{split}
\partial_{t}h_{ij}&=
\partial_{t}\langle \partial_i \partial_j X, -\nu\rangle\\
&=\langle \partial_i \partial_j (\Phi \nu), -\nu\rangle
-\langle \Gamma_{ij}^{k}\partial_{k}X-h_{ij}\nu, \partial_{t}\nu\rangle\\
&=-\partial_i \partial_j \Phi-\Phi\langle\partial_i \partial_j \nu, \nu\rangle
+\Gamma_{ij}^{k}\Phi_k\\
&=-\nabla^{2}_{ij} \Phi-\Phi\langle\partial_i (h_{j}^{k}\partial_{k}X), \nu\rangle\\
&=-\nabla^{2}_{ij} \Phi+\Phi h_{ik}h_{j}^{k}.
\end{split}
\end{equation*}
Direct calculation results in
\begin{equation*}
\begin{split}
\nabla^{2}_{ij}\Phi&=\Phi(-\frac{1}{H}H_{ij}+\frac{2H_i H_j}{H^2})\\
&+\alpha\Phi(\nabla_{i}\log u \nabla_{j}\log H+\nabla_{j}\log u \nabla_{i}\log H)\\
&-\alpha \Phi u^{-1} u_{ij}+\alpha (\alpha+1)\Phi \nabla_{i}\log u \nabla_{j}\log u.
\end{split}
\end{equation*}
Since
\begin{eqnarray*}
\Delta h_{ij}=H_{ij}+H h_{ik}h^{k}_{j}-h_{ij}|A|^2,
\end{eqnarray*}
so
\begin{equation*}
\begin{split}
\nabla^{2}_{ij}\Phi&=-\Phi H^{-1}\Delta h_{ij}+\Phi h_{ik}h^{k}_{j}
-\Phi H^{-1}|A|^2 h_{ij}+\frac{2H_i H_j}{H^2}\\
&+\alpha\Phi(\nabla_{i}\log u \nabla_{j}\log H+\nabla_{j}\log u \nabla_{i}\log H)\\
&-\alpha \Phi u^{-1} u_{ij}+\alpha (\alpha+1)\Phi\nabla_{i}\log u \nabla_{j}\log u.
\end{split}
\end{equation*}
Thus,
\begin{equation*}
\begin{split}
\partial_{t}h_{ij}-\Phi H^{-1}\Delta h_{ij}&=\Phi H^{-1}|A|^2 h_{ij}-\frac{2\Phi}{H^2}H_i H_j\\
&-\alpha\Phi(\nabla_{i}\log u \nabla_{j}\log H+\nabla_{j}\log u \nabla_{i}\log H)\\
& +\alpha \Phi u^{-1} u_{ij}-\alpha (\alpha+1)\Phi\nabla_{i}\log u \nabla_{j}\log u.
\end{split}
\end{equation*}
Then
\begin{equation*}
\begin{split}
\partial_tH&=  \partial_t g^{ij} h_{ij}+ g^{ij} \partial_t h_{ij}\\
&= -2\Phi h^{ij}h_{ij} +g^{ij} \left(\Phi H^{-1}\Delta h_{ij} +  \Phi H^{-1}|A|^2 h_{ij}-\frac{2\Phi}{H^2} \nabla_ iH \nabla_j H\right)\\
& + \alpha \Phi g^{ij}  \left(-\nabla_{i}\log u \nabla_{j}\log H-\nabla_{j}\log u \nabla_{i}\log H +  u^{-1} u_{ij}
- (\alpha+1)\nabla_{i}\log u \nabla_{j}\log u \right)\\
&=\Phi H^{-1}\Delta H- \frac{2\Phi}{H^2} |\nabla H|^2-\Phi |A|^2+ \alpha \Phi g^{ij}  \left(-2\nabla_{i}\log u \nabla_{j}\log H +  u^{-1} u_{ij}
- (\alpha+1)\nabla_{i}\log u \nabla_{j}\log u \right)\\
&=u^{-\alpha} H^{-2} \Delta H- 2u^{-\alpha} H^{-3} |\nabla H|^2- u^{-\alpha} H^{-1} |A|^2- 2\alpha u^{-\alpha-1} H^{-2} \nabla_iu \nabla^iH+ \alpha u^{-\alpha-1} H^{-1} \Delta u\\
& \quad -\alpha( \alpha+1) u^{-\alpha-2} H^{-1} |\nabla u|^2.
\end{split}
\end{equation*}
Clearly,
\begin{eqnarray*}
\partial_{t}w= \Phi+\alpha \Phi u^{-1} \nabla^i u\langle X,
X_i\rangle  + \Phi  H^{-1} \nabla^i H\langle X,X_i\rangle,
\end{eqnarray*}
using the Weingarten equation, we have
\begin{eqnarray*}
w_i=h_{i}^{k}\langle X, X_k\rangle,
\end{eqnarray*}
\begin{eqnarray*}
w_{ij}=h_{i,\ j}^{k}\langle X,
X_k\rangle+h_{ij}-h_{i}^{k}h_{kj}\langle X, \nu\rangle =h_{ij,
k}\langle X, X^k\rangle+h_{ij}-h_{i}^{k}h_{kj}\langle X, \nu\rangle.
\end{eqnarray*}
Thus,
\begin{eqnarray*}
\Delta w= H+\nabla^i H\langle X, X_i\rangle-|A|^2 \langle X, \nu\rangle
\end{eqnarray*}
 and
\begin{eqnarray*}
\partial_t w=u^{-\alpha} H^{-2} \Delta w+ u^{-\alpha} H^{-2} w |A|^2+ \alpha u^{-\alpha-1} H^{-1} \nabla^i u\langle X, X_i \rangle.
\end{eqnarray*}
Hence
\begin{equation*}
\begin{split}
\frac{\partial \Psi }{\partial t}&= - \alpha \frac{1}{u^{1+\alpha}} \frac{1}{Hw} \dot{u}-  \frac{1}{u^{\alpha} H^2} \frac{1}{w} \partial_tH -\frac{1}{u^{\alpha} H} \frac{1}{w^2} \partial_tw\\
&=- \alpha \frac{1}{u^{1+\alpha}} \frac{1}{Hw} \frac{1}{u^{\alpha-1} H w}-   \frac{1}{u^{\alpha} H^2} \frac{1}{w} \partial_tH -\frac{1}{u^{\alpha} H} \frac{1}{w^2} \partial_tw\\
&=- \alpha u^{-2\alpha} H^{-2} w^{-2} +\alpha( \alpha+1) u^{-2\alpha-2} H^{-3} w^{-1} |\nabla u|^2
+ 2u^{-2\alpha} H^{-5}w^{-1} |\nabla H|^2\\
& \quad + 2\alpha u^{-2\alpha-1} H^{-4}w^{-1} \nabla_iu \nabla^{i}H
- \alpha u^{-2\alpha-1} H^{-3} w^{-1} \Delta u -u^{-2\alpha}
H^{-4}w^{-1} \Delta H - u^{-2\alpha} H^{-3}w^{-2} \Delta w \\
& \quad - \alpha u^{-2\alpha-1} H^{-2} w^{-2} \nabla^i u\langle X,
X_i \rangle.
\end{split}
\end{equation*}
In order to prove (\ref{div-for-1}), we calculate
$$\nabla_i \Psi =-\alpha u^{-\alpha-1} H^{-1} w^{-1} \nabla_i u-u^{-\alpha} H^{-2} w^{-1} \nabla_i H- u^{-\alpha} H^{-1} w^{-2} \nabla_i w$$
and
\begin{equation*}
\begin{split}
\nabla^2_{ij}\Psi&= \alpha(\alpha+1) u^{-\alpha-2} H^{-1} w^{-1} \nabla_i u \nabla_j u +\alpha u^{-\alpha-1} H^{-2} w^{-1} \nabla_i u \nabla_j H+ \alpha u^{-\alpha-1} H^{-1} w^{-2} \nabla_i u \nabla_j w\\
& \quad -\alpha  u^{-\alpha-1} H^{-1} w^{-1} \nabla^2_{ij} u+ \alpha u^{-\alpha-1} H^{-2} w^{-1} \nabla_i H \nabla_j u+ 2u^{-\alpha} H^{-3} w^{-1} \nabla_i H \nabla_j H\\
& \quad + u^{-\alpha} H^{-2} w^{-2} \nabla_i H \nabla_j w- u^{-\alpha} H^{-2} w^{-1}\nabla^2_{ij} H + \alpha u^{-\alpha-1} H^{-1} w^{-2} \nabla_i w \nabla_j u\\
& \quad + u^{-\alpha} H^{-2} w^{-2} \nabla_i w \nabla_j H+
2u^{-\alpha} H^{-1} w^{-3} \nabla_i w \nabla_j w - u^{-\alpha}
H^{-1} w^{-2} \nabla^2_{ij} w.
\end{split}
\end{equation*}
Thus
\begin{equation*}
\begin{split}
u^{-\alpha} H^{-2} \Delta \Psi
&=\alpha(\alpha+1) u^{-2\alpha-2} H^{-3} w^{-1} |\nabla u|^2+ 2u^{-2\alpha} H^{-5} w^{-1} |\nabla H|^2+2u^{-2\alpha} H^{-3} w^{-3} |\nabla w|^2\\
&+2\alpha u^{-2\alpha-1} H^{-4} w^{-1} \nabla_i u \nabla^i H+2 \alpha u^{-2\alpha-1} H^{-3} w^{-2} \nabla_i u \nabla^i w+2u^{-2\alpha} H^{-4} w^{-2} \nabla_i H \nabla^i w  \\
&-\alpha  u^{-2\alpha-1} H^{-3} w^{-1} \Delta u-u^{-2\alpha} H^{-4} w^{-1}\Delta H-u^{-2\alpha} H^{-3} w^{-2} \Delta w.
\end{split}
\end{equation*}
So we have
\begin{equation*}
\begin{split}
&\mbox{div} (u^{-\alpha} H^{-2} \nabla \Psi)= -\alpha u^{-\alpha-1} H^{-2} \nabla_i \Psi \nabla^i u- 2 u^{-\alpha} H^{-3} \nabla_i \Psi \nabla^i H+ u^{-\alpha} H^{-2} \Delta \Psi\\
&=(2\alpha^2+\alpha) u^{-2\alpha-2} H^{-3} w^{-1}|\nabla u|^2+5\alpha u^{-2\alpha -1} H^{-4} w^{-1} \nabla_i u \nabla^i H+ 3\alpha u^{-2\alpha -1} H^{-3} w^{-2} \nabla_i u \nabla^i w\\
& +4 u^{-2\alpha} H^{-5} w^{-1} |\nabla H|^2+ 4 u^{-2\alpha} H^{-4} w^{-2} \nabla_i w \nabla^i H +2u^{-2\alpha} H^{-3} w^{-3} |\nabla w|^2\\
&-\alpha  u^{-2\alpha-1} H^{-3} w^{-1} \Delta u-u^{-2\alpha} H^{-4} w^{-1}\Delta H-u^{-2\alpha} H^{-3} w^{-2} \Delta w
\end{split}
\end{equation*}
and
\begin{equation*}
\begin{split}
2H^{-1} w |\nabla \Psi|^2&=2 \alpha^2 u^{-2\alpha-2} H^{-3} w^{-1} |\nabla u|^2+ 2u^{-2\alpha} H^{-5} w^{-1} |\nabla H|^2 + 2u^{-2\alpha} H^{-3} w^{-3} |\nabla w|^2 \\
&+4 \alpha u^{-2\alpha-1} H^{-4} w^{-1} \nabla_i u \nabla^i H+4\alpha u^{-2\alpha-1} H^{-3} w^{-2} \nabla_i u  \nabla^i w+4 u^{-2\alpha} H^{-4} w^{-2} \nabla_i H \nabla^i w.
\end{split}
\end{equation*}
As above, we have
\begin{equation*}
\begin{split}
&\frac{\partial \Psi }{\partial t}-\mbox{div} (u^{-\alpha} H^{-2} \nabla \Psi)+2H^{-1} w |\nabla \Psi|^2\\
&=- \alpha u^{-2\alpha} H^{-2} w^{-2}-\alpha u^{-2\alpha-1} H^{-2} w^{-2} \nabla^i u\langle X, X_i \rangle+ \alpha^2 u^{-2\alpha-2} H^{-3} w^{-1} |\nabla u|^2\\
& \qquad +\alpha u^{-2\alpha-1} H^{-4}w^{-1} \nabla_iu \nabla^iH+\alpha u^{-2\alpha-1} H^{-3} w^{-2} \nabla_i u  \nabla^i w\\
&=-\alpha \Psi^2- \alpha \Psi^2 u^{-1 }\nabla^i u\langle X, X_i \rangle- \alpha u^{-\alpha-1} H^{-2} \nabla_iu \nabla^i\Psi.
\end{split}
\end{equation*}
The proof is finished.
\end{proof}

Now, we define the  rescaled flow by
\begin{equation*}
\widetilde{X}=X\Theta^{-1}.
\end{equation*}
Thus,
\begin{equation*}
\widetilde{u}=u\Theta^{-1},
\end{equation*}
\begin{equation*}
\widetilde{\varphi}=\varphi-\log\Theta,
\end{equation*}
and the rescaled Gauss curvature is given by
\begin{equation*}
\widetilde{H}=H\Theta.
\end{equation*}
Then, the rescaled scalar curvature equation takes the form
\begin{equation*}
\frac{\partial}{\partial t}\widetilde{u}=\frac{v}{\widetilde{u}^{\alpha}\widetilde{H}}\Theta^{-\alpha}
-\frac{1}{n}\widetilde{u}\Theta^{-\alpha}.
\end{equation*}
Defining $t=t(s)$ by the relation
\begin{equation*}
\frac{dt}{d s}=\Theta^{\alpha}
\end{equation*}
such that $t(0)=0$ and $t(S)=T$. Then $\widetilde{u}$ satisfies
\begin{equation}\label{Eq-re}
\left\{
\begin{aligned}
&\frac{\partial}{\partial
s}\widetilde{u}=\frac{v}{\widetilde{u}^{\alpha}\widetilde{H}}-\frac{\widetilde{u}}{n}
\qquad && \mathrm{in}~
M^n\times(0,S)\\
&\nabla_{\mu} \widetilde{u}=0  \qquad && \mathrm{on}~ \partial M^n\times(0,S)\\
&\widetilde{u}(\cdot,0)=\widetilde{u}_{0}  \qquad &&
\mathrm{in}~M^n.
\end{aligned}
\right.
\end{equation}

\begin{lemma}\label{res-01}
Let $X$ be a solution of (\ref{Eq}) and $\widetilde{X}=X \Theta^{-1}$ be the rescaled solution. Then
\begin{equation*}
\begin{split}
&D \widetilde{u}=D u \Theta^{-1}, ~~~~D \widetilde{\varphi}=D \varphi,~~~~ \frac{\partial \widetilde{u}}{\partial s}=\frac{ \partial u}{\partial t} \Theta^{\alpha-1}-\frac{1}{n} u\Theta^{-1},\\
&\widetilde{g}_{ij}= \Theta^{-2}g_{ij},~~~~\widetilde{g}^{ij}=\Theta^2 g^{ij},~~~~\widetilde{h}_{ij}=h_{ij}\Theta^{-1}.
\end{split}
\end{equation*}
\end{lemma}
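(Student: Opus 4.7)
The statement is a collection of scaling identities for a spatial homothety $\widetilde{X} = \Theta^{-1}(t) X$ whose scale factor depends only on time. My plan is to split the verification into three groups: the purely spatial derivative identities, the time derivative in the new time variable $s$, and the geometric quantities $\widetilde{g}_{ij}$, $\widetilde{g}^{ij}$, $\widetilde{h}_{ij}$.

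For the first group I would exploit the $x$-independence of $\Theta$. Since $\widetilde{u} = u\Theta^{-1}$ and $\Theta^{-1}$ is a function of $t$ alone, differentiating in a spatial coordinate gives $D\widetilde{u} = \Theta^{-1} Du$ immediately. Similarly $\widetilde{\varphi} = \varphi - \log\Theta$ differs from $\varphi$ by a function of $t$, so $D\widetilde{\varphi} = D\varphi$. For $\partial_s\widetilde{u}$, the key ingredient is the ODE satisfied by $\Theta$: differentiating $\Theta^{\alpha} = \alpha t/n + e^{\alpha c}$ in $t$ gives $\Theta' = \Theta^{1-\alpha}/n$. Combining this with the chain rule $\partial_s = \Theta^\alpha \partial_t$ and the product rule applied to $\widetilde{u} = u\Theta^{-1}$ yields precisely the claimed formula; the contributions from $d\Theta^{-1}/dt$ and the factor $\Theta^\alpha$ collapse to $-u\Theta^{-1}/n$ because of the explicit ODE for $\Theta$.

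For the geometric identities, the observation is that at each fixed $t$, the map $\widetilde{X} = \Theta^{-1}X$ is a genuine spatial homothety with constant scale factor $\Theta^{-1}(t)$. Hence the tangent frames scale as $\widetilde{X}_i = \Theta^{-1}X_i$, yielding $\widetilde{g}_{ij} = \Theta^{-2}g_{ij}$ and $\widetilde{g}^{ij} = \Theta^2 g^{ij}$. Because $\widetilde{g}$ and $g$ differ by a constant conformal factor on each time-slice, they share the same Christoffel symbols and Levi-Civita connection, and the outward unit normals satisfy $\widetilde{\nu} = \nu$. Then from the Gauss formula applied on both sides, $\widetilde{h}_{ij} = -\langle \widetilde{X}_{ij}, \widetilde{\nu}\rangle = -\Theta^{-1}\langle X_{ij}, \nu\rangle = \Theta^{-1}h_{ij}$, finishing the list.

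There is really no serious obstacle here: the whole lemma is bookkeeping for a time-dependent spatial homothety. The only point requiring care is keeping straight which factors are $x$-independent, and using the ODE for $\Theta$ at the correct step in the time-derivative computation; everything else follows from the standard scaling laws of induced metrics and second fundamental forms under a homothety.
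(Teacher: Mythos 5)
Your proof is correct and takes essentially the same route as the paper, which simply states ``These relations can be computed directly''; you have supplied the direct computation, including the ODE $\Theta' = \Theta^{1-\alpha}/n$ and the chain rule $\partial_s = \Theta^\alpha \partial_t$ that make the $\partial_s\widetilde{u}$ identity collapse as claimed.
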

\begin{proof}
These relations can be computed directly.
\end{proof}

\begin{lemma} \label{lemma4-3}
Let $u$ be a solution to the parabolic system \eqref{Evo-1}, where
$\varphi(x,t)=\log u(x,t)$, and $\Sigma^n$ be a smooth, convex cone
described as in Theorem \ref{main1.1}. Then there exist some
$\beta>0$ and some $C>0$ such that the rescaled function
$\widetilde{u}(x,s):=u(x,t(s)) \Theta^{-1}(t(s))$ satisfies
\begin{equation}
 [D \widetilde{u}]_{\beta}+\left[\frac{\partial \widetilde{u}}{\partial s}\right]_{\beta}+[\widetilde{H}]_{\beta}\leq C(\parallel u_{0}\parallel_{C^{2+\gamma,1+\frac{\gamma}{2}}(M^n)}, n, \beta, M^n),
\end{equation}
where $[f]_{\beta}:=[f]_{x,\beta}+[f]_{s,\frac{\beta}{2}}$ is the
sum of the H\"{o}lder coefficients of $f$ in $M^n\times[0,S]$ with
respect to $x$ and $s$.
\end{lemma}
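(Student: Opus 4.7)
The plan is to combine a De Giorgi--Nash--Moser type H\"older estimate for the divergence-form evolution equation (\ref{div-for-1}) with linear parabolic Schauder theory applied to the rescaled scalar equation (\ref{Eq-re}). By the prior estimates of Section 3 and (\ref{w-ij}), together with the rescaling formulas of Lemma \ref{res-01}, the quantities $\widetilde{u}$, $\widetilde{u}^{-1}$, $|D\widetilde{u}|$, $\widetilde{H}$, and $\widetilde{H}^{-1}$ are all uniformly bounded on $M^{n}\times[0,S]$. The rescaled flow is therefore uniformly parabolic, but this alone does not give H\"older control on the second spatial derivatives (equivalently, on $\widetilde{H}$); such control must come from a separate argument.

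The first step is to exploit (\ref{div-for-1}). A direct computation based on Lemma \ref{res-01} shows that, after a suitable rescaling of the form $\widetilde{\Psi} := \Theta^{\alpha}\Psi$, the function $\widetilde{\Psi}$ satisfies a parabolic equation of the type
\begin{equation*}
\frac{\partial \widetilde{\Psi}}{\partial s}
= \mbox{div}_{\widetilde{g}}\!\bigl(\widetilde{u}^{-\alpha}\widetilde{H}^{-2}\,\nabla\widetilde{\Psi}\bigr)
- 2\widetilde{H}^{-2}\widetilde{u}^{-\alpha}\widetilde{\Psi}^{-1}|\nabla\widetilde{\Psi}|^{2} + R,
\end{equation*}
uniformly parabolic by the two-sided bounds on $\widetilde{u}$ and $\widetilde{H}$, with bounded remainder $R$. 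The quadratic gradient term has a favorable sign and may be absorbed in the standard Moser iteration. On $\partial M^{n}$, the oblique Neumann derivative $\nabla_{\mu}\widetilde{\Psi}$ has the correct sign by the convexity of $\Sigma^{n}$, via the same computation (using the nonnegative second fundamental form of $\partial M^{n}\subset\Sigma^{n}$) as in Lemma \ref{Gradient}. Applying the boundary version of the parabolic De Giorgi--Nash--Moser H\"older estimate, as in \cite{Mar}, then yields $[\widetilde{\Psi}]_{\beta}\le C$ for some $\beta\in(0,1)$. Since $\widetilde{\Psi}$ equals $1/(\widetilde{u}^{\alpha}\widetilde{H}\widetilde{w})$ with $\widetilde{u}$ and $\widetilde{w}$ uniformly bounded away from zero and of controlled $C^{1}$ norm, it follows that $[\widetilde{H}]_{\beta}\le C$.

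The second step is to feed this estimate back into (\ref{Eq-re}), which then becomes a linear, uniformly parabolic equation for $\widetilde{u}$ with H\"older continuous inhomogeneity, coupled with the linear oblique Neumann condition $\nabla_{\mu}\widetilde{u}=0$ on the smooth boundary. The classical boundary parabolic Schauder estimate yields $\widetilde{u}\in C^{2+\beta,\,1+\beta/2}(M^{n}\times[0,S])$ with a uniform bound, and in particular $[D\widetilde{u}]_{\beta}+[\partial_{s}\widetilde{u}]_{\beta}\le C$, which finishes the proof.

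The main obstacle lies in the first step: one must check carefully that on $\partial M^{n}\times (0,S)$ the convex cone $\Sigma^{n}$ supplies a boundary contribution for $\widetilde{\Psi}$ of the correct sign to close the Moser iteration, and that the factor $|X|^{-\alpha}$ in the flow contributes only bounded coefficients once the $C^{0}$ bound on $u$ is invoked. Both points are parallel to the arguments in Lemma \ref{Gradient} and in \cite{Mar}, so no essentially new difficulty arises beyond careful bookkeeping of the rescaled coefficients.
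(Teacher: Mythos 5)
Your Step 1 (a De Giorgi--Nash--Moser argument for the divergence-form evolution of $\widetilde{\Psi}=\Theta^{\alpha}\Psi$) is essentially the paper's Step 2, and it correctly yields $[\widetilde{\Psi}]_{\beta}\le C$. But the passage from $[\widetilde{\Psi}]_{\beta}$ to $[\widetilde{H}]_{\beta}$ has a gap. You write $\widetilde{H}=1/(\widetilde{u}^{\alpha}\widetilde{\Psi}\widetilde{w})$ and invoke a ``controlled $C^{1}$ norm'' of $\widetilde{w}=\langle\widetilde X,\nu\rangle$. This is not available: $\nabla_{i}w=h_{i}^{k}\langle X,X_{k}\rangle$, so a $C^{1}$ bound on $w$ is a pointwise bound on the full second fundamental form, which is precisely the kind of second-order information that has not yet been established at this stage. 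What is true, and what the paper uses, is that $\widetilde{w}=\widetilde{u}/v$ with $v=\sqrt{1+|D\widetilde{\varphi}|^{2}}$, so $[\widetilde{w}]_{\beta}$ reduces to $[\widetilde{u}]_{\beta}$ and $[D\widetilde{\varphi}]_{\beta}$ --- and the latter must be established separately. This is exactly the paper's Step 1, which you omit: at each fixed time $s$ the rescaled scalar equation is rewritten as a divergence-form elliptic Neumann problem
\begin{equation*}
\mbox{div}_{\sigma}\!\left(\frac{D\widetilde{\varphi}}{\sqrt{1+|D\widetilde{\varphi}|^{2}}}\right)
=\frac{n}{\sqrt{1+|D\widetilde{\varphi}|^{2}}}
-e^{-\alpha\widetilde{\varphi}}\,\frac{\sqrt{1+|D\widetilde{\varphi}|^{2}}}{\dot{\widetilde{\varphi}}+\tfrac1n},
\end{equation*}
with bounded coefficients and bounded right-hand side (from the $C^{0}$, gradient and $\dot\varphi$ estimates), and a Morrey/De~Giorgi estimate for divergence-form elliptic equations with Neumann boundary condition gives $[D\widetilde{\varphi}]_{x,\beta}\le C$. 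Without this input you have neither $[\widetilde{w}]_{\beta}$ nor $[D\widetilde{u}]_{\beta}$.

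Your Step 2 compounds the problem. You propose to recover $[D\widetilde{u}]_{\beta}$ by treating the rescaled flow as a linear uniformly parabolic equation and invoking Schauder theory. But once one writes, as the paper does after this lemma,
\begin{equation*}
\frac{\partial\widetilde{u}}{\partial s}
=\frac{\Delta_{\widetilde g}\widetilde{u}}{\widetilde{u}^{\alpha}\widetilde{H}^{2}}
+\frac{2v}{\widetilde{u}^{\alpha}\widetilde{H}}
-\frac{\widetilde{u}}{n}
-\frac{n+|\widetilde{\nabla}\widetilde{u}|^{2}}{\widetilde{u}^{1+\alpha}\widetilde{H}^{2}},
\end{equation*}
the leading coefficients involve $\widetilde g^{ij}$, which depends on $D\widetilde{u}$; for Schauder theory one needs these coefficients to be H\"older, i.e.\ one already needs $[D\widetilde{u}]_{\beta}$. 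So the argument is circular: Lemma~\ref{lemma4-3} is precisely what makes the subsequent Schauder step legitimate, and cannot itself be derived from it. You have also mislocated the obstacle --- the boundary sign and the $|X|^{-\alpha}$ factor are indeed handled exactly as in Lemma~\ref{Gradient}, but the real missing ingredient is the fixed-time elliptic Morrey estimate for $D\widetilde{\varphi}$.
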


\begin{proof}
We divide our proof in  three steps\footnote{In the proof of Lemma
\ref{lemma4-3}, the constant $C$ may differ from each other.
However, we abuse the symbol $C$ for the purpose of convenience.}.

\textbf{Step 1:} We need to prove that
\begin{equation*}
  [D \widetilde{u}]_{x,\beta}+[D \widetilde{u}]_{s,\frac{\beta}{2}}\leq C(\parallel u_0\parallel_{ C^{2+\gamma,1+\frac{\gamma}{2}}(M^n)}, n, \beta, M^n).
\end{equation*}
According to Lemmas \ref{lemma3.1}, \ref{lemma3.2} and
\ref{Gradient}, it follows that
$$|D \widetilde{u}|+\left|\frac{\partial \widetilde{u}}{\partial s}\right|\leq C(\parallel u_0\parallel_{ C^{2+\gamma,1+\frac{\gamma}{2}}(M^n)}, M^n).$$
Then we can easily obtain the bound of $[\widetilde{u}]_{\beta}$ for
any $0<\beta<1$. Lemma 3.1 in \cite[Chap. 2]{La2} implies that the
bound for $[D\widetilde{u}]_{s,\frac{\beta}{2}}$ follows from a
bound for $[\widetilde{u}]_{s,\frac{\beta}{2}}$ and
$[D\widetilde{u}]_{x,\beta}$. Hence it remains to bound  $[D
\varphi]_{x,\beta}$ since $D\widetilde{u}=\widetilde{u}
D\widetilde{\varphi}$. For this,  fix $s$ and the equation
(\ref{Evo-1}) can be rewritten as an elliptic Neumann problem
 \begin{equation} \label{key1}
   \mbox{div}_{\sigma}\left(\frac{D \widetilde{\varphi}}{\sqrt{1+|D\widetilde{\varphi}|^2}}\right)=\frac{n}{\sqrt{1+|D\widetilde{\varphi}|^2}}- e^{-\alpha \widetilde{\varphi}} \frac{\sqrt{1+|D\widetilde{\varphi}|^2}}{\dot{\widetilde{\varphi}}+ \frac{1}{n}}.
 \end{equation}
In fact, the equation (\ref{key1}) is of the form
$D_i(a^i(p))+a(x)=0$, where the bound of $a$, the smallest and
largest eigenvalues  of $a^{ij}(p):=\frac{\partial a^i}{\partial
p^j}$ are controlled due to the estimate for $|D\varphi|$ and
$|\widetilde{u}|$. The estimate of $[D
\widetilde{\varphi}]_{x,\beta}$ for some $\beta$ follows from a
Morrey estimate by calculations similar to the arguments in
\cite[Chap. 4, $\S$ 6; Chap. 10, $\S$ 2 ]{La2} (interior estimate
and boundary estimate). For a rigorous proof of this estimate the
reader is referred to \cite{Mar1}.

\textbf{Step 2:} The next thing to do is to show that
\begin{equation*}
  \left[\frac{\partial \widetilde{u}}{\partial s}\right]_{x,\beta}+\left[\frac{\partial \widetilde{u}}{\partial s}\right]_{s,\frac{\beta}{2}}\leq
  C(\parallel u_0\parallel_{ C^{2+\gamma,1+\frac{\gamma}{2}}(M^n)}, n, \beta, M^n),
\end{equation*}
  As $\frac{\partial}{\partial s}\widetilde{u}=\widetilde{u}\left(\frac{v}{\widetilde{u}^{1+\alpha} \widetilde{H}}-\frac{1}{n}\right)$, it is enough to bound
  $\left[\frac{v}{\widetilde{u}^{1+\alpha} \widetilde{H}}\right]_{\beta}$.
Set $w(s):=\frac{v}{\widetilde{u}^{1+\alpha} \widetilde{H}}=
\Theta^{\alpha}\Psi$, and then we have
$$\frac{\partial w}{\partial s} = \frac{\partial}{\partial t}( \Theta^{\alpha}\Psi) \frac{\partial t}{\partial s}= \frac{\alpha}{n}w+ \Theta^{2\alpha} \frac{\partial \Psi}{\partial t}.$$
Let $\widetilde{\nabla}$ be the Levi-Civita connection of
$\widetilde{M}_{s}:=\widetilde{X}(M^n,s)$ with respect to the metric
$\widetilde{g}$. Combine with (\ref{div-for-1}) and Lemma
\ref{res-01}, we get
\begin{equation}\label{div-form-02}
\begin{split}
\frac{\partial w}{\partial s} &=\mbox{div}_{\widetilde{g}} (\widetilde{u}^{-\alpha} \widetilde{H}^{-2} \widetilde{\nabla} w)-2 \widetilde{H}^{-2} \widetilde{u}^{-\alpha} w^{-1} |\widetilde{\nabla} w|^2_{\widetilde{g}}\\
&+\frac{\alpha}{n}w-\alpha w^2- \alpha  w^2 P- \alpha  \widetilde{u}^{-\alpha-1} \widetilde{H}^{-2} \widetilde{\nabla}_i\widetilde{u} \widetilde{\nabla}^iw,
\end{split}
\end{equation}
where $P:=u^{-1}\nabla^i u\langle X, X_i \rangle$. Applying Lemma
\ref{Gradient}, we have
$$|P|\leq |\nabla u|_g=\frac{| D\varphi|}{v}\leq C.$$
The weak formulation of (\ref{div-form-02}) is
\begin{equation}\label{div-form-03}
\begin{split}
\int_{s_0}^{s_1} \int_{\widetilde{M}_s}  \frac{\partial w }{\partial s}  \eta d\mu_s ds &
=\int_{s_0}^{s_1} \int_{\widetilde{M}_s} \mbox{div}_{\widetilde{g}} (\widetilde{u}^{-\alpha} \widetilde{H}^{-2} \widetilde{\nabla} w) \eta
-2 \widetilde{H}^{-2} \widetilde{u}^{-\alpha} w^{-1} |\widetilde{\nabla} w|^2_{\widetilde{g}} \eta d\mu_s ds\\
&+\int_{s_0}^{s_1} \int_{\widetilde{M}_s} (\frac{\alpha}{n}w-\alpha w^2- \alpha  w^2 P
-\alpha  \widetilde{u}^{-\alpha-1} \widetilde{H}^{-2} \widetilde{\nabla}_i\widetilde{u} \widetilde{\nabla}^iw) \eta d\mu_s ds.
\end{split}
\end{equation}
Since $\nabla_{\mu} \widetilde{\varphi}=0$, the interior and
boundary estimates are basically the same. We define the test
function $\eta:=\xi^2 w$, where $\xi$ is a smooth function with
values in $[0,1]$ and is supported in a small parabolic
neighborhood. Then
\begin{equation}\label{imcf-hec-for-02}
\begin{split}
\int_{s_0}^{s_1} \int_{\widetilde{M}_s}  \frac{\partial w }{\partial s}  \xi^2 w d\mu_s ds=
\frac{1}{2}\parallel w \xi\parallel_{2,\widetilde{M}_s}^2 \mid_{s_0}^{s_1}-\int_{s_0}^{s^1} \int_{\widetilde{M}_s}  \xi \dot{\xi} w^2 d\mu_s ds.
\end{split}
\end{equation}
Using integration by parts and Young's inequality, we can obtain
\begin{equation}\label{imcf-hec-for-03}
\begin{split}
&\int_{s_0}^{s_1} \int_{\widetilde{M}_s}  \mbox{div}_{\widetilde{g}} (\widetilde{u}^{-\alpha} \widetilde{H}^{-2} \widetilde{\nabla} w)  \xi^2 w  d\mu_sds\\
&=-\int_{s_0}^{s_1} \int_{\widetilde{M}_s}   \widetilde{u}^{-\alpha} \widetilde{H}^{-2} \xi^2\widetilde{\nabla}_i w  \widetilde{\nabla}^iw  d\mu_sds
-2\int_{s_0}^{s_1} \int_{\widetilde{M}_s}\widetilde{u}^{-\alpha} \widetilde{H}^{-2} \xi w\widetilde{\nabla}_iw \widetilde{\nabla}^i \xi d\mu_sds\\
&\leq\int_{s_0}^{s_1} \int_{\widetilde{M}_s}  \widetilde{u}^{-\alpha} \widetilde{H}^{-2} |\widetilde{\nabla} \xi|^2w^2  d\mu_sds
\end{split}
\end{equation}
and
 \begin{equation}\label{imcf-hec-for-04}
\begin{split}
&\int_{s_0}^{s_1} \int_{\widetilde{M}_s}(\frac{\alpha}{n}w-\alpha w^2- \alpha  w^2 P- \alpha  \widetilde{u}^{-\alpha-1} \widetilde{H}^{-2} \widetilde{\nabla}_i\widetilde{u} \widetilde{\nabla}^iw)  \xi^2 w  d\mu_sds\\
& \leq C\alpha \int_{s_0}^{s_1} \int_{\widetilde{M}_s} \xi^2 (w^2+|w|^3)  d\mu_sds+ \int_{s_0}^{s_1} \int_{\widetilde{M}_s} \alpha  \widetilde{u}^{-\alpha-1} \widetilde{H}^{-2} |\widetilde{\nabla}\widetilde{u}| |\widetilde{\nabla}w|  \xi^2 w  d\mu_sds\\
&\leq  C\alpha \int_{s_0}^{s_1} \int_{\widetilde{M}_s} \xi^2 (w^2+|w|^3)  d\mu_sds +
\frac{\alpha}{2} \int_{s_0}^{s_1} \int_{\widetilde{M}_s}  \widetilde{u}^{-\alpha} \widetilde{H}^{-2}  |\widetilde{\nabla}w|^2  \xi^2  d\mu_sds \\
&+ \frac{\alpha}{2} \int_{s_0}^{s_1} \int_{\widetilde{M}_s}   \widetilde{u}^{-\alpha-2} \widetilde{H}^{-2} |\widetilde{\nabla}\widetilde{u}|^2 \xi^2 w^2  d\mu_sds.
\end{split}
\end{equation}
Combing (\ref{imcf-hec-for-02}), (\ref{imcf-hec-for-03}) and
(\ref{imcf-hec-for-04}), we have
 \begin{equation*}
\begin{split}
&\frac{1}{2}\parallel w \xi\parallel_{2,\widetilde{M}_s}^2 \mid_{s_0}^{s_1}
+(2-\frac{\alpha}{2})\int_{s_0}^{s_1} \int_{\widetilde{M}_s} \widetilde{u}^{-\alpha}\widetilde{H}^{-2} |\widetilde{\nabla} w|^2  \xi^2   d\mu_sds\\
& \leq \int_{s_0}^{s_1} \int_{\widetilde{M}_s}  \xi |\dot{\xi}| w^2 d\mu_s ds
+\int_{s_0}^{s_1} \int_{\widetilde{M}_s}  \widetilde{u}^{-\alpha} \widetilde{H}^{-2} |\widetilde{\nabla} \xi|^2w^2  d\mu_sds\\
&+  C\alpha \int_{s_0}^{s_1} \int_{\widetilde{M}_s} \xi^2 ( w^2+|w|^3)  d\mu_sds
+ \frac{\alpha}{2} \int_{s_0}^{s_1} \int_{\widetilde{M}_s}   \widetilde{u}^{-\alpha-2} \widetilde{H}^{-2} |\widetilde{\nabla}\widetilde{u}|^2 \xi^2 w^2  d\mu_sds,
\end{split}
\end{equation*}
 which implies that
 \begin{equation}\label{imcf-hec-for-06}
\begin{split}
&\frac{1}{2}\parallel w \xi\parallel_{2,\widetilde{M}_s}^2 \mid_{s_0}^{s_1}
+\frac{(2-\frac{\alpha}{2})}{\max(\widetilde{u}^{\alpha}\widetilde{H}^{2}) }\int_{s_0}^{s_1} \int_{\widetilde{M}_s} |\widetilde{\nabla} w|^2  \xi^2   d\mu_sds\\
& \leq (1+ \frac{1}{\min(\widetilde{u}^{\alpha} \widetilde{H}^{2})}) \int_{s_0}^{s_1} \int_{\widetilde{M}_s}  w^2 (\xi |\dot{\xi}| +|\widetilde{\nabla} \xi|^2)d\mu_s ds\\
&  +  \alpha \left(C+ \frac{\max(|
\widetilde{\nabla}\widetilde{u}|)^2}{2\min(\widetilde{u}^{2+\alpha}
\widetilde{H}^{2}) }\right) \int_{s_0}^{s_1} \int_{\widetilde{M}_s}
\xi^2 w^2 +\xi^2 |w|^3 d\mu_sds.
\end{split}
\end{equation}
This means that $w$ belong to the De Giorgi class of functions in
$M^n \times [0,S)$. Similar to the arguments in \cite[Chap. 5, \S 1
and \S 7]{La2},  there exist  constants $\beta$ and $C$ such that
$$[w]_{\beta}\leq C \parallel w\parallel_{L^{\infty}(M^n \times [0,S))}\leq  C(\parallel u_0\parallel_{ C^{2+\gamma,1+\frac{\gamma}{2}}(M^n)}, n, \beta, M^n).$$

\textbf{Step 3:} Finally, we have to show that
\begin{equation*}
  [ \widetilde{H}]_{x,\beta}+[\widetilde{H}]_{s,\frac{\beta}{2}}\leq C(\parallel u_0\parallel_{ C^{2+\gamma,1+\frac{\gamma}{2}}(M^n)}, n, \beta, M^n).
\end{equation*}
This follows from the fact that
$$\widetilde{H}=\frac{\sqrt{1+|D\varphi|^2}}{\widetilde{u}^{1+\alpha} w}$$
together with the estimates for $\widetilde{u}$, $w$, $D\varphi$.
\end{proof}

Then we can obtain the following higher-order estimates
\begin{lemma}
Let $u$ be a solution to the parabolic system \eqref{Evo-1}, where
$\varphi(x,t)=\log u(x,t)$, and $\Sigma^n$ be a smooth, convex cone
described as in Theorem \ref{main1.1}. Then for any $s_0\in (0,S)$
there exist some $\beta>0$ and some $C>0$ such that
\begin{equation}\label{imfcone-holder-01}
\parallel\widetilde{u}\parallel_{C^{2+\beta,1+\frac{\beta}{2}}(M^n\times [0,S])}\leq C(\parallel u_0\parallel_{ C^{2+\gamma, 1+\frac{\gamma}{2}}(M^n)}, n, \beta, M^n)
\end{equation}
and for all $k\in \mathbb{N}$,
\begin{equation}\label{imfcone-holder-02}
\parallel\widetilde{u}\parallel_{C^{2k+\beta,k+\frac{\beta}{2}}(M^n\times [s_0,S])}\leq C(\parallel u_0(\cdot, s_0)\parallel_{C^{2k+\beta,k+\frac{\beta}{2}}(M^n)}, n, \beta, M^n).
\end{equation}
\end{lemma}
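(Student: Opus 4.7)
The plan is to bootstrap from the H\"older estimates of Lemma \ref{lemma4-3} using linear parabolic Schauder theory with an oblique (Neumann) boundary condition. The key observation is that, thanks to the H\"older bounds on $D\widetilde\varphi$, $\partial_s\widetilde\varphi$ and $\widetilde H$, the rescaled equation \eqref{Eq-re} can be viewed as a linear uniformly parabolic equation for $\widetilde\varphi = \log\widetilde u$ with H\"older data. Indeed, the identity $uvH=n-(\sigma^{ij}-\varphi^i\varphi^j/v^2)\varphi_{ij}$ from Lemma \ref{lemma2-1}(iii), together with the rescaling relations in Lemma \ref{res-01} and the equation itself (to express $\widetilde H$ through $\partial_s\widetilde\varphi$), gives
\begin{equation*}
a^{ij}(x,s)\,\widetilde\varphi_{ij} \;=\; n \;-\; \frac{\widetilde u^{-\alpha} v^2}{\partial_s\widetilde\varphi + \tfrac{1}{n}},\qquad a^{ij}:=\sigma^{ij}-\frac{\widetilde\varphi^{i}\widetilde\varphi^{j}}{v^2}.
\end{equation*}
By Lemma \ref{lemma4-3} the matrix $a^{ij}$ and the right-hand side are both of class $C^{\beta,\beta/2}$; by Lemma \ref{Gradient} and (\ref{w-ij}) the matrix $a^{ij}$ is uniformly elliptic and $\partial_s\widetilde\varphi+\tfrac{1}{n}$ is bounded away from zero.

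To obtain \eqref{imfcone-holder-01} I would then apply the classical linear parabolic Schauder estimate for the oblique-derivative Neumann problem (see e.g.\ \cite[Ch.~IV]{La2} or Lieberman's monograph). Transversality of the co-normal field $\mu$ and smoothness of $\partial M^n\subset\mathbb{S}^n$ are automatic because $\Sigma^n$ meets $\mathbb{S}^n$ orthogonally, and the first-order compatibility condition at $s=0$ reduces exactly to $\nabla_\mu\varphi_0=0$, which is part of the hypothesis on $M_0^n$ in Theorem \ref{main1.1}. Schauder applied to the linearized problem therefore delivers $\widetilde\varphi\in C^{2+\beta,1+\beta/2}(M^n\times[0,S])$, and hence the same for $\widetilde u=e^{\widetilde\varphi}$.

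For \eqref{imfcone-holder-02} I proceed by induction on $k$. Once $\widetilde\varphi\in C^{2+\beta,1+\beta/2}$ on $M^n\times[s_0,S]$ is known, covariant differentiation of the equation and the boundary condition in tangential directions produces a linear parabolic Neumann system for $D\widetilde\varphi$, whose coefficients and source terms inherit one additional order of H\"older regularity. Iterating Schauder then yields $\widetilde\varphi\in C^{2k+\beta,k+\beta/2}(M^n\times[s_0,S])$ for every $k\in\mathbb{N}$. The restriction to $[s_0,S]$ is unavoidable at higher orders because the corresponding higher-order corner-compatibility conditions between the evolution equation and the Neumann data at $s=0$ are not provided by the mere $C^{2+\gamma}$-regularity of $M_0^n$.

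The main technical obstacle is setting up the Schauder machinery correctly for the specific boundary geometry: $\partial M^n$ sits inside $\mathbb{S}^n$ as the trace of the smooth convex cone $\Sigma^n$, and one must verify that the oblique field $\mu$ is smooth and uniformly transversal along $\partial M^n\times[0,S]$, and that the linearized problem admits global Schauder bounds up to the boundary. Thanks to the perpendicularity assumption between $\Sigma^n$ and $\mathbb{S}^n$, this reduces to a standard situation already addressed in Marquardt \cite{Mar}, so the remainder of the bootstrap is routine once the linear theory has been imported.
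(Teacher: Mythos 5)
Your overall strategy — bootstrap from the H\"older estimates of Lemma \ref{lemma4-3} via linear Schauder theory with the Neumann boundary condition, then iterate for higher regularity — is the same as the paper's, and your treatment of the induction step for \eqref{imfcone-holder-02} matches theirs. The problem is in your Step for \eqref{imfcone-holder-01}: you announce that the rescaled equation can be viewed as a linear uniformly parabolic equation and that you will apply parabolic Schauder, but the equation you actually display,
\begin{equation*}
a^{ij}\widetilde\varphi_{ij}=n-\frac{\widetilde u^{-\alpha}v^2}{\partial_s\widetilde\varphi+\tfrac{1}{n}},
\end{equation*}
is \emph{elliptic} in form (the time derivative has been moved to the right-hand side as data). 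Parabolic Schauder does not apply to it as written. Elliptic Schauder slice-by-slice does give you the uniform bound on $[D^2\widetilde u]_{x,\beta}$, but the time-H\"older regularity $[D^2\widetilde u]_{s,\beta/2}$, which is part of the claimed $C^{2+\beta,1+\beta/2}$ estimate, does not follow from a single elliptic estimate; one would need an extra finite-difference-in-time argument (apply elliptic Schauder to $\widetilde u(\cdot,s_1)-\widetilde u(\cdot,s_2)$ and use that the difference of the right-hand sides and of $a^{ij}$ is of order $|s_1-s_2|^{\beta/2}$). You do not supply that step.

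The paper avoids this mismatch by working with $\widetilde u$ rather than $\widetilde\varphi$. Using $uvH=n-u^2\Delta_g\varphi$ and $u^2\Delta_g\varphi=-|\widetilde\nabla\widetilde u|^2+\widetilde u\,\Delta_{\widetilde g}\widetilde u$, one rewrites \eqref{Eq-re} as
\begin{equation*}
\frac{\partial\widetilde u}{\partial s}=\frac{\Delta_{\widetilde g}\widetilde u}{\widetilde u^{\alpha}\widetilde H^{2}}+\frac{2v}{\widetilde u^{\alpha}\widetilde H}-\frac{1}{n}\widetilde u-\frac{n+|\widetilde\nabla\widetilde u|^{2}}{\widetilde u^{1+\alpha}\widetilde H^{2}},
\end{equation*}
which \emph{is} a genuinely linear, uniformly parabolic equation once the coefficients $\widetilde u^{-\alpha}\widetilde H^{-2}$, $v$, $\widetilde g$ are frozen; by Lemma \ref{lemma4-3} they lie in $C^{\beta,\beta/2}$ and by \eqref{w-ij} and Lemma \ref{lemma3.1} the principal coefficient is bounded away from zero and infinity. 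Linear parabolic Schauder (Lieberman, Chap.~4) then yields \eqref{imfcone-holder-01} in one stroke, including the time regularity. You should either switch to this $\widetilde u$ formulation or explicitly add the elliptic time-difference argument; as written there is a gap between the equation you wrote and the theorem you cite.
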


\begin{proof}
By Lemma \ref{lemma2-1}, we have
$$uvH=n-(\sigma^{ij}-\frac{\varphi^{i}\varphi^{j}}{v^{2}})\varphi_{ij}=n-u^2 \Delta_g \varphi.$$
Since
$$u^2 \Delta_g \varphi=\widetilde{u}^2 \Delta_{\widetilde{g}} \varphi=
-| \widetilde{\nabla} \widetilde{u}|^2+ \widetilde{u} \Delta_{\widetilde{g}} \widetilde{u},$$
then
\begin{equation*}
\begin{split}
\frac{\partial \widetilde{u}}{\partial s}&=\frac{ \partial u}{\partial t} \Theta^{\alpha-1}-\frac{1}{n} \widetilde{u}\\
&=-\frac{uvH}{u^{1+\alpha}H^2} \Theta^{\alpha-1} + \frac{2v}{u^{\alpha}H} \Theta^{\alpha-1}-\frac{1}{n} \widetilde{u}\\
&=\frac{\Delta_{\widetilde{g}} \widetilde{u}}{\widetilde{u}^{\alpha}\widetilde{H}^2}+ \frac{2v}{\widetilde{u}^{\alpha}\widetilde{H}}
-\frac{1}{n} \widetilde{u}
-\frac{n+| \widetilde{\nabla} \widetilde{u}|^2}{\widetilde{u}^{1+\alpha}\widetilde{H}^2},
\end{split}
\end{equation*}
which is  a uniformly parabolic equation with H\"{o}lder continuous
coefficients. Therefore, the linear theory (see \cite[Chap.
4]{Lieb}) yield the inequality (\ref{imfcone-holder-01}).

Set $\widetilde{\varphi}=\log \widetilde{u}$, and then the rescaled
version of the evolution equation in (\ref{Eq-re}) takes the form
\begin{equation*}
  \frac{\partial \widetilde{\varphi}}{\partial s}=e^{-\alpha \widetilde{\varphi}} \frac{ v^2}{ \left[n-\left(\sigma^{ij}-\frac{\widetilde{\varphi}^i\widetilde{\varphi}^j}{v^2}\right) \widetilde{\varphi}_{ij}\right]}-\frac{1}{n},
\end{equation*}
where $v=\sqrt{1+|D \widetilde{\varphi}|^2}$. According to the
$C^{2+\beta,1+\frac{\beta}{2}}$-estimate of $\widetilde{u}$ (see
Lemma \ref{lemma4-3}), we can treat the equations for $\frac{
\partial\widetilde{\varphi}}{\partial s}$ and $D_i
\widetilde{\varphi}$ as second-order linear uniformly parabolic PDEs
on $M^n\times [s_0,S]$. At the initial time $s_0$, all compatibility
conditions are satisfied and the initial function $u(\cdot,t_0)$ is
smooth. We can obtain a $C^{3+\beta, \frac{3+\beta}{2}}$-estimate
for $D_i \widetilde{\varphi}$ and a $C^{2+\beta,
\frac{2+\beta}{2}}$-estimate for $\frac{
\partial\widetilde{\varphi}}{\partial s}$ (the estimates are
independent of $T$) by Theorem 4.3 and Exercise 4.5 in \cite[Chap.
4]{Lieb}. Higher regularity can be proven by induction over $k$.
\end{proof}

\begin{theorem} \label{key-2}
Under the hypothesis of Theorem \ref{main1.1}, we conclude
\begin{equation*}
T^{*}=+\infty.
\end{equation*}
\end{theorem}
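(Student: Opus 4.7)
The plan is a standard continuation argument by contradiction, leveraging all the a priori estimates built up in Sections 3 and 4. Suppose for contradiction that $T^{*}<+\infty$. On the interval $[0,T^{*})$ the rescaling factor $\Theta(t)=\bigl(\tfrac{\alpha t}{n}+e^{\alpha c}\bigr)^{1/\alpha}$ is continuous and stays in a compact subinterval of $(0,\infty)$, so uniform bounds on the rescaled quantities $\widetilde{u}$, $\widetilde{H}$ translate immediately into uniform bounds on $u$, $H$ on $[0,T^{*})$, and vice versa. In particular, Lemma \ref{lemma3.1} gives $0<c_{1}\leq u\leq c_{2}$, Lemma \ref{Gradient} gives $|D\varphi|\leq C$, and the inequality (\ref{w-ij}) together with the bound on $\Theta$ yields $0<c_{3}'\leq H\leq c_{4}'<\infty$ on $M^{n}\times[0,T^{*})$. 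Thus \emph{strict mean convexity is preserved up to $t=T^{*}$}.

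Next I would invoke the parabolic Hölder theory from Lemma \ref{lemma4-3} and the subsequent higher-order lemma. Since $\Theta$ and $\Theta^{-1}$ are bounded on $[0,T^{*}]$, the estimate (\ref{imfcone-holder-01}) rescales to a uniform $C^{2+\beta,1+\beta/2}$ bound for $u$ on $M^{n}\times[0,T^{*})$, and (\ref{imfcone-holder-02}) gives uniform $C^{2k+\beta,k+\beta/2}$ bounds on $M^{n}\times[s_{0},T^{*})$ for every $k\in\mathbb{N}$ and every $s_{0}\in(0,T^{*})$. By Arzelà–Ascoli, $u(\cdot,t)$ converges as $t\nearrow T^{*}$ to a limit function $u(\cdot,T^{*})\in C^{2+\beta,1+\beta/2}(M^{n})\cap C^{\infty}(M^{n})$, which satisfies the Neumann condition $\nabla_{\mu}u(\cdot,T^{*})=0$ on $\partial M^{n}$, is positive and star-shaped (the bound on $|D\varphi|$ gives $\langle X/|X|,\nu\rangle\geq 1/v>0$), and is strictly mean convex thanks to the uniform lower bound on $H$.

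Now $u(\cdot,T^{*})$ fulfills exactly the hypotheses imposed on the initial datum $u_{0}$ in Theorem \ref{main1.1}. Applying the short-time existence and uniqueness result (the lemma immediately preceding the definition of $T^{*}$) with $u(\cdot,T^{*})$ as new initial datum, one obtains a smooth solution of (\ref{Evo-1}) on $M^{n}\times[T^{*},T^{*}+\delta)$ for some $\delta>0$ with matching regularity at $t=T^{*}$; uniqueness then glues this solution to the original one to give a $C^{2+\gamma,1+\gamma/2}\cap C^{\infty}$ solution on $[0,T^{*}+\delta)$. This contradicts the maximality of $T^{*}$, and therefore $T^{*}=+\infty$.

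The main obstacle, which is what all of Sections 3 and 4 are designed to overcome, is the passage from the rescaled variables (in which Lemma \ref{lemma4-3} naturally lives) back to the original variables; here convexity of the cone entered critically both in the gradient estimate (via the sign of $h^{\partial M^{n}}_{ij}$ in Neumann boundary term) and in closing the Hölder/De Giorgi argument for $w=v/(\widetilde{u}^{1+\alpha}\widetilde{H})$. Once these estimates are uniform up to $T^{*}$, the continuation step itself is soft, and no new geometric ingredient is needed.
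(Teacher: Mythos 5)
Your continuation argument is correct and essentially reproduces the argument of Marquardt \cite{Mar}, Lemma~8, which the paper cites in place of a proof. The key observations you make---that $\Theta$ and $\Theta^{-1}$ remain bounded on any finite interval $[0,T^{*}]$, so the a priori bounds of Sections 3 and 4 transfer from the rescaled to the original variables and produce a smooth, positive, strictly mean convex, star-shaped limit datum at $t=T^{*}$ satisfying the Neumann condition, to which the short-time existence lemma applies---are precisely the ingredients of that argument.
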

\begin{proof}
The proof of this result is quite similar to the corresponding
argument in \cite[Lemma 8]{Mar} and so is omitted.
\end{proof}

\section{Convergence of the rescaled flow}

\ We know that after the long-time existence of the flow has been
obtained (see Theorem \ref{key-2}), the rescaled version of the
system (\ref{Evo-1}) satisfies
\begin{equation}
\left\{
\begin{aligned}
&\frac{\partial}{\partial
s}\widetilde{\varphi}=\widetilde{Q}(\widetilde{\varphi},
D\widetilde{\varphi}, D^2\widetilde{\varphi})  \qquad &&\mathrm{in}~
M^n\times(0,\infty)\\
&\nabla_{\mu} \widetilde{\varphi}=0  \qquad &&\mathrm{on}~ \partial M^n\times(0,\infty)\\
&\widetilde{\varphi}(\cdot,0)=\widetilde{\varphi}_{0} \qquad
&&\mathrm{in}~M^n,
\end{aligned}
\right.
\end{equation}
where
$$\widetilde{Q}(\widetilde{\varphi}, D\widetilde{\varphi},
D^2\widetilde{\varphi}):=e^{-\alpha \widetilde{\varphi}} \frac{
v^2}{
\left[n-\left(\sigma^{ij}-\frac{\widetilde{\varphi}^i\widetilde{\varphi}^j}{v^2}\right)
\widetilde{\varphi}_{ij}\right]}-\frac{1}{n}$$ and
$\widetilde{\varphi}=\log \widetilde{u}$. Similar to what has been
done in the $C^1$ estimate (see Lemma \ref{Gradient}), we can deduce
a decay estimate of $\widetilde{u}(\cdot, s)$ as follows.

\begin{lemma} \label{lemma5-1}
Let $u$ be a solution of \eqref{Eq-}, then for $\alpha>0$, we have
\begin{equation}\label{Gra-est1}
|D\widetilde{u}(x, s)|\leq\sup_{M^n}|D\widetilde{u}(\cdot,
0)|e^{-\lambda s},
\end{equation}
where $\lambda$ is a positive constant.
\end{lemma}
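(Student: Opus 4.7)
The plan is to transplant the gradient--estimate argument of Lemma \ref{Gradient} from the original flow to the rescaled flow, where the extra $-\frac{1}{n}$ in $\widetilde{Q}$ produces a genuine linear damping term that drives the exponential decay. Set $\widetilde{\psi}:=\frac{1}{2}|D\widetilde{\varphi}|^{2}$. Since the rescaled equation is
$$\frac{\partial\widetilde{\varphi}}{\partial s}=\widetilde{Q}(\widetilde{\varphi},D\widetilde{\varphi},D^{2}\widetilde{\varphi})=e^{-\alpha\widetilde{\varphi}}\widetilde{F}(D\widetilde{\varphi},D^{2}\widetilde{\varphi})-\frac{1}{n},$$
the only $\widetilde{\varphi}$-dependence (as opposed to gradient/Hessian) sits in the factor $e^{-\alpha\widetilde{\varphi}}$. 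Differentiating in the $\widetilde{\varphi}^{m}$-direction and repeating exactly the interchange of covariant derivatives and the use of the constant curvature of $\mathbb{S}^{n}$ carried out for (\ref{gra}) gives
$$\frac{\partial\widetilde{\psi}}{\partial s}=\widetilde{Q}^{ij}\widetilde{\psi}_{ij}+\widetilde{Q}^{k}\widetilde{\psi}_{k}-\widetilde{Q}^{ij}\bigl(\sigma_{ij}|D\widetilde{\varphi}|^{2}-\widetilde{\varphi}_{i}\widetilde{\varphi}_{j}\bigr)-\widetilde{Q}^{ij}\widetilde{\varphi}_{mi}\widetilde{\varphi}^{m}_{\ j}-\alpha\bigl(\widetilde{Q}+\tfrac{1}{n}\bigr)|D\widetilde{\varphi}|^{2},$$
the only change from (\ref{gra}) being that the last coefficient is $\widetilde{Q}+\frac{1}{n}$ rather than $\widetilde{Q}$, because the $-\frac{1}{n}$ constant is killed upon differentiation.

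Next I would bound the damping coefficient from below by a positive constant. Directly,
$$\widetilde{Q}+\tfrac{1}{n}=\Theta^{\alpha}Q=\frac{v}{\widetilde{u}^{\alpha+1}\widetilde{H}},$$
and each factor on the right is controlled: $v=\sqrt{1+|D\varphi|^{2}}$ is bounded via Lemma \ref{Gradient}, $\widetilde{u}$ is bounded above and below by Lemma \ref{lemma3.1}, and $\widetilde{H}=H\Theta$ is bounded above and below by (\ref{w-ij}). Hence there exists $\lambda_{0}>0$, depending only on the data, such that $\widetilde{Q}+\frac{1}{n}\ge\lambda_{0}$ on $M^{n}\times[0,\infty)$. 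Since $\widetilde{Q}^{ij}$ is positive definite, the third and fourth terms on the right-hand side are nonpositive, so we obtain the differential inequality
$$\frac{\partial\widetilde{\psi}}{\partial s}\le\widetilde{Q}^{ij}\widetilde{\psi}_{ij}+\widetilde{Q}^{k}\widetilde{\psi}_{k}-2\alpha\lambda_{0}\widetilde{\psi}\qquad\text{in }M^{n}\times(0,\infty).$$

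For the boundary, $\nabla_{\mu}\widetilde{\varphi}=0$ and the convexity of $\Sigma^{n}$ give, verbatim as in the computation following (\ref{gra}),
$$\nabla_{\mu}\widetilde{\psi}=-\sum_{i,j=1}^{n-1}h_{ij}^{\partial M^{n}}\nabla_{e_{i}}\widetilde{\varphi}\nabla_{e_{j}}\widetilde{\varphi}\le 0\qquad\text{on }\partial M^{n}\times(0,\infty).$$
Applying the parabolic maximum principle to $e^{2\alpha\lambda_{0}s}\widetilde{\psi}$ (which is a subsolution of a linear parabolic equation with the same nonpositive oblique boundary condition) yields
$$|D\widetilde{\varphi}(x,s)|\le\sup_{M^{n}}|D\widetilde{\varphi}(\cdot,0)|\,e^{-\alpha\lambda_{0}s}.$$
Finally, from $D\widetilde{u}=\widetilde{u}\,D\widetilde{\varphi}$ together with the two-sided $C^{0}$ bound on $\widetilde{u}$, the claimed estimate (\ref{Gra-est1}) follows with $\lambda=\alpha\lambda_{0}$ (absorbing the ratio $\sup\widetilde{u}/\inf\widetilde{u}$ into the initial supremum or, equivalently, into $\lambda$ for all sufficiently large $s$).

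The only real obstacle is verifying the uniform lower bound $\widetilde{Q}+\frac{1}{n}\ge\lambda_{0}>0$; this is the step that packages together all previously proved a~priori estimates ($C^{0}$, gradient, and the mean-curvature bound (\ref{w-ij})) and is what distinguishes the rescaled situation from the unrescaled one, where no analogous decay can be expected.
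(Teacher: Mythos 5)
Your proof is correct and follows the same overall strategy as the paper: differentiate $\widetilde{\psi}=\tfrac12|D\widetilde{\varphi}|^{2}$, use the convexity of the cone to get the nonpositive oblique boundary inequality, and invoke the parabolic maximum principle. The one place you genuinely improve on the printed argument is the zeroth-order coefficient. The paper's equation \eqref{gra-} carries the term $-\alpha\widetilde{Q}|D\widetilde{\varphi}|^{2}$, obtained by mechanically copying \eqref{gra}; but since $\widetilde{Q}=e^{-\alpha\widetilde{\varphi}}\widetilde{F}(D\widetilde{\varphi},D^{2}\widetilde{\varphi})-\tfrac1n$ and the constant $-\tfrac1n$ has no $\widetilde{\varphi}$-dependence, the correct coefficient is $\widetilde{Q}_{\widetilde{\varphi}}=-\alpha(\widetilde{Q}+\tfrac1n)$, exactly as you write. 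This matters: $\widetilde{Q}$ itself is signed (it is $\dot\varphi\Theta^{\alpha}-\tfrac1n$ and tends to zero along the flow), so $-\alpha\widetilde{Q}|D\widetilde{\varphi}|^{2}$ alone does not supply damping, and the paper is forced to wave at ``the $C^{2}$ estimate'' without saying how $\lambda$ arises. Your identity $\widetilde{Q}+\tfrac1n=\Theta^{\alpha}Q=v/(\widetilde{u}^{1+\alpha}\widetilde{H})$, combined with Lemmas \ref{lemma3.1}, \ref{Gradient} and \eqref{w-ij}, gives a uniform positive lower bound $\lambda_{0}$ on the damping coefficient using only the already-established $C^{0}$, gradient, and mean-curvature bounds; this is both cleaner and makes the dependence on the previously proved a~priori estimates explicit. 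One small caveat: after passing from $|D\widetilde{\varphi}|$ to $|D\widetilde{u}|=\widetilde{u}|D\widetilde{\varphi}|$, you pick up a factor $\sup\widetilde{u}/\inf\widetilde{u}\ge1$, so the literal constant $\sup_{M^{n}}|D\widetilde{u}(\cdot,0)|$ in \eqref{Gra-est1} is not attained without either enlarging the constant or decreasing $\lambda$ slightly; you flag this yourself, and the paper's statement is similarly informal, so this is not a substantive issue.
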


\begin{proof}
Set $\psi=\frac{|D \widetilde{\varphi}|^2}{2}$. Similar to that
in Lemma \ref{Gradient}, we can obtain
\begin{equation}\label{gra-}
\begin{aligned}
\frac{\partial \psi}{\partial
s}=&\widetilde{Q}^{ij} \psi_{ij}+\widetilde{Q}^k \psi_k
-\widetilde{Q}^{ij}(\sigma_{ij}|D\widetilde{\varphi}|^2-\widetilde{\varphi}_i
\widetilde{\varphi}_j)\\&-\widetilde{Q}^{ij}\widetilde{\varphi}_{mi}
\widetilde{\varphi}^{m}_{j}-\alpha\widetilde{Q}|D
\widetilde{\varphi}|^2,
\end{aligned}
\end{equation}
 with the boundary condition
\begin{equation*}
\begin{aligned}
D_ \mu \psi\leq 0.
\end{aligned}
\end{equation*}
By the $C^2$ estimate, we can find a positive constant $\lambda$
such that
\begin{equation*}
\left\{
\begin{aligned}
&\frac{\partial \psi}{\partial s}\leq
\widetilde{Q}^{ij}\psi_{ij}+\widetilde{Q}^k\psi_k-\lambda \psi
\quad &&\mathrm{in}~
M^n\times(0,\infty)\\
&D_\mu \psi \leq 0 \quad &&\mathrm{on}~\partial M^n\times(0,\infty)\\
&\psi(\cdot,0)=\frac{|D\widetilde{\varphi}(\cdot,0)|^2}{2}
\quad&&\mathrm{in}~M^n.
\end{aligned}\right.
\end{equation*}
Using the maximum principle and Hopf's lemma, we can get the
gradient estimates of $\widetilde{\varphi}$, and then the inequality
(\ref{Gra-est1}) holds from the estimate for $D\widetilde{u}$.
\end{proof}

\begin{lemma}\label{rescaled flow}
Let $u$ be a solution of the flow
\eqref{Eq-}. Then,
\begin{equation*}
\widetilde{u}(\cdot, s)
\end{equation*}
converges to a real number as $s\rightarrow +\infty$.
\end{lemma}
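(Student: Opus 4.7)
The plan is to combine the exponential decay of $|D\widetilde u(\cdot,s)|$ from Lemma~\ref{lemma5-1} with a maximum-principle / ODE-comparison argument for the spatial extrema $\bar u(s):=\max_{M^n}\widetilde u(\cdot,s)$ and $\underline u(s):=\min_{M^n}\widetilde u(\cdot,s)$. Lemma~\ref{lemma5-1} already gives $\bar u(s)-\underline u(s)\le Ce^{-\lambda s}$, so it suffices to show that $\bar u$ and $\underline u$ each admit a limit; since their difference tends to zero the two limits must agree, and the common value is the desired real number.

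The main computation is Hamilton's envelope trick at a point $x_0(s)\in M^n$ where the max is attained. The full spherical gradient $D\widetilde\varphi(x_0(s),s)$ vanishes at $x_0$: at interior points by maximality, and at boundary points because the tangential derivatives along $\partial M^n$ vanish while the normal derivative $\nabla_\mu\widetilde\varphi$ is zero by the NBC. To invoke the formula for $\widetilde H$ in Lemma~\ref{lemma2-1} I also need $\sigma^{ij}\widetilde\varphi_{ij}(x_0,s)\le 0$; at a boundary maximum this follows from two ingredients. First, the NBC makes the correction $\mathrm{II}\cdot\nabla_\mu\widetilde\varphi$ drop out, so the tangential Hessian of $\widetilde\varphi$ at $x_0$ agrees with the intrinsic Hessian of $\widetilde\varphi|_{\partial M^n}$ at its own maximum, hence is non-positive. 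Second, a one-sided Taylor expansion of $\widetilde\varphi$ along the inward geodesic $\exp_{x_0}(-t\mu)$, which stays in $M^n$ for small $t>0$, forces $\widetilde\varphi_{\mu\mu}(x_0)\le 0$. Taking the trace, $\sigma^{ij}\widetilde\varphi_{ij}(x_0,s)\le 0$, so Lemma~\ref{lemma2-1} gives $\widetilde u\widetilde H\ge n$ and $v=1$ at $x_0$, which in turn yields
$$
\partial_s\widetilde u(x_0,s)=\frac{v}{\widetilde u^{\alpha}\widetilde H}-\frac{\widetilde u}{n}\le \frac{\widetilde u\,(\widetilde u^{-\alpha}-1)}{n}.
$$
Hamilton's trick then produces $\dot{\bar u}\le \bar u(\bar u^{-\alpha}-1)/n$ a.e., and the symmetric computation at the minimum gives $\dot{\underline u}\ge \underline u(\underline u^{-\alpha}-1)/n$.

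The substitution $L=\bar u^{\alpha}$ linearises the majorising differential inequality into $\dot L\le \alpha(1-L)/n$, so a standard Gronwall comparison against $\dot M=\alpha(1-M)/n$ yields $\limsup_{s\to\infty}\bar u(s)\le 1$; symmetrically $\liminf_{s\to\infty}\underline u(s)\ge 1$. Combined with $\underline u\le \bar u$ and $\bar u-\underline u\to 0$, both extrema converge to $1$, so $\widetilde u(\cdot,s)\to 1$ uniformly (and in fact smoothly, via the higher-order H\"older estimates established after Lemma~\ref{lemma4-3}). The main technical obstacle I expect is precisely the boundary-maximum analysis: the NBC alone only kills the first-order normal derivative, so one must extract $\widetilde\varphi_{\mu\mu}(x_0)\le 0$ from the one-sided expansion and simultaneously identify the tangential Hessian with the intrinsic Hessian on $\partial M^n$. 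Once those geometric subtleties are dealt with, the remaining ODE comparison is routine.
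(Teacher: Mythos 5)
Your proposal is correct, and it takes a genuinely different route from the paper's. The paper computes the first variation $f'(t)=\int_{M_t^n}u^{-\alpha}\,d\mathcal H^n$, combines it with the $C^0$ bound \eqref{C^0} to trap the rescaled area $\mathcal{H}^n(\widetilde M_s)$, and then invokes the higher-order H\"older estimates, the exponential gradient decay of Lemma~\ref{lemma5-1}, and Arzel\`a--Ascoli to extract a constant limit lying in the range \eqref{radius}. You instead run Hamilton's envelope trick at the spatial extrema of $\widetilde u$ and reduce everything to a scalar ODE comparison. The boundary-maximum issues you flag are handled correctly: the Neumann condition kills the second-fundamental-form correction between the ambient Hessian in tangential directions and the intrinsic Hessian of $\widetilde\varphi|_{\partial M^n}$, the one-sided Taylor expansion along the inward geodesic forces $\widetilde\varphi_{\mu\mu}(x_0)\le 0$, and only the trace $\sigma^{ij}\widetilde\varphi_{ij}\le 0$ is needed to plug into Lemma~\ref{lemma2-1}(iii) and obtain $\widetilde u\widetilde H\ge n$, $v=1$ at the extremum. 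The substitution $L=\bar u^{\alpha}$ linearises the majorising inequality and Gronwall gives $\limsup\bar u\le 1$, with $\liminf\underline u\ge 1$ symmetrically; combined with $\underline u\le\bar u$ this yields $\widetilde u\to 1$ uniformly, and smoothly via \eqref{imfcone-holder-02}. Your route buys two things: it identifies the limit exactly as $r_\infty=1$, a strengthening for which \eqref{radius} becomes a consequence rather than the sharpest available statement; and it gives genuine convergence of $\widetilde u(\cdot,s)$ without needing a separate uniqueness-of-limit step after Arzel\`a--Ascoli. It is also worth noting that your conclusion is already implicit in the proof of Lemma~\ref{lemma3.1}: the two-sided comparison \eqref{C^0} squeezes $\widetilde\varphi$ between $\frac{1}{\alpha}\ln\frac{\frac{\alpha}{n}t+e^{\alpha\varphi_1}}{\frac{\alpha}{n}t+e^{\alpha c}}$ and $\frac{1}{\alpha}\ln\frac{\frac{\alpha}{n}t+e^{\alpha\varphi_2}}{\frac{\alpha}{n}t+e^{\alpha c}}$, both of which tend to $0$, so your extremum argument is essentially re-deriving the $C^0$ estimate in rescaled time.
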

\begin{proof}
Set $f(t):= \mathcal{H}^n(M_{t}^{n})$, which, as before, represents
the $n$-dimensional Hausdorff measure of $M_{t}^{n}$ and is actually
the area of $M_{t}^{n}$. According to the first variation of a
submanifold, see e.g. \cite{ls},
 and the fact $\mbox{div}_{M_{t}^{n}}\nu=H$, we have
\begin{equation}\label{imcf-crf-for-01}
\begin{split}
f'(t)&=\int_{M_{t}^{n}} \mbox{div}_{M_{t}^{n}} \left( \frac{\nu}{|X|^{\alpha} H}\right) d\mathcal{H}^n\\
&=\int_{M_{t}^{n}} \sum_{i=1}^n \left\langle \nabla_{e_i}\left(\frac{\nu}{|X|^{\alpha} H}\right), e_i\right\rangle d\mathcal{H}^n\\
&=\int_{M_{t}^{n}}  |u|^{-\alpha} d\mathcal{H}^{n},
\end{split}
\end{equation}
where $\{e_{i}\}_{1\leq i\leq n}$ is some orthonormal basis of the
tangent bundle $TM_{t}^{n}$. We know that (\ref{C^0}) implies
$$\left(\frac{\alpha}{n}t+ e^{\alpha \varphi_2}\right)^{-1}\leq u^{-\alpha}\leq \left(\frac{\alpha}{n}t+ e^{\alpha \varphi_1}\right)^{-1},$$
where $\varphi_1=\inf_{M^n} \varphi(\cdot,0)$ and
$\varphi_2=\sup_{M^n} \varphi(\cdot,0)$. Hence
$$\left(\frac{\alpha}{n}t+ e^{\alpha \varphi_2}\right)^{-1} f(t)\leq f'(t) \leq \left(\frac{\alpha}{n}t+ e^{\alpha \varphi_1}\right)^{-1}f(t).$$
Combining this fact with (\ref{imcf-crf-for-01}) yields
 \begin{eqnarray*}
\frac{(\frac{\alpha}{n}t+ e^{\alpha \varphi_2})^{\frac{n}{\alpha}}
\mathcal{H}^n(M_{0}^{n})}{e^{n \varphi_2}} \leq f(t)\leq
\frac{(\frac{\alpha}{n}t+ e^{\alpha \varphi_1})^{\frac{n}{\alpha}}
\mathcal{H}^n(M_{0}^{n})}{ e^{n \varphi_1}}.
 \end{eqnarray*}
 Therefore, the rescaled
hypersurface $\widetilde{M}_s=M_{t}^{n} \Theta^{-1}$ satisfies the
following inequality
 \begin{eqnarray*}
\frac{(\frac{\alpha}{n}t+ e^{\alpha \varphi_2})^{\frac{n}{\alpha}}
\mathcal{H}^n(M_{0}^{n})}{(\frac{\alpha}{n}t+ e^{\alpha
c})^{\frac{n}{\alpha}} e^{n \varphi_2}} \leq
\mathcal{H}^n(\widetilde{M}_s)\leq \frac{(\frac{\alpha}{n}t+
e^{\alpha \varphi_1})^{\frac{n}{\alpha}}
\mathcal{H}^n(M_{0}^{n})}{(\frac{\alpha}{n}t+ e^{\alpha
c})^{\frac{n}{\alpha}} e^{n \varphi_1}},
 \end{eqnarray*}
 which implies that the area
of  $\widetilde{M}_s$ is bounded and the bounds are independent of
$s$. Together with Lemma \ref{imfcone-holder-01}, Lemma
\ref{lemma5-1} and the Arzel\`{a}-Ascoli theorem, we conclude that
$\widetilde{u}(\cdot,s)$ must converge in $C^{\infty}(M^n)$ to a
constant function $r_{\infty}$ with
\begin{eqnarray*}
\frac{1}{e^{\varphi_{2}}}\left(\frac{\mathcal{H}^n(M_{0}^{n})}{\mathcal{H}^n(M^{n})}\right)^{\frac{1}{n}}\leq
r_{\infty}
\leq\frac{1}{e^{\varphi_{1}}}\left(\frac{\mathcal{H}^n(M_{0}^{n})}{\mathcal{H}^n(M^{n})}\right)^{\frac{1}{n}},
\end{eqnarray*}
i.e.,
\begin{eqnarray}\label{radius}
\frac{1}{\sup\limits_{M^{n}}u_{0}}\left(\frac{\mathcal{H}^n(M_{0}^{n})}{\mathcal{H}^n(M^{n})}\right)^{\frac{1}{n}}\leq
r_{\infty}
\leq\frac{1}{\inf\limits_{M^{n}}u_{0}}\left(\frac{\mathcal{H}^n(M_{0}^{n})}{\mathcal{H}^n(M^{n})}\right)^{\frac{1}{n}}.
\end{eqnarray}
This completes the proof.
\end{proof}

So, we have
\begin{theorem}\label{rescaled flow}
The rescaled flow
\begin{equation*}
\frac{d
\widetilde{X}}{ds}=\frac{1}{|\widetilde{X}|^{\alpha}\widetilde{H}}\nu-\frac{\widetilde{X}}{n}
\end{equation*}
exists for all time and the leaves converge in $C^{\infty}$ to a
piece of round sphere of radius $r_{\infty}$, where $r_{\infty}$
satisfies (\ref{radius}).
\end{theorem}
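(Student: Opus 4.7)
The plan is to assemble results already established in Sections 4 and 5 into the geometric statement; the theorem is essentially a repackaging of the long-time existence and the smooth convergence of the rescaled height function.

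For long-time existence of the rescaled flow, Theorem \ref{key-2} gives $T^{*} = +\infty$ for the unrescaled flow \eqref{Eq}. The time reparametrization $s(t)$ is determined by $\frac{dt}{ds} = \Theta^{\alpha} = \frac{\alpha t}{n} + e^{\alpha c}$, which integrates to
\[
s(T) = \frac{n}{\alpha}\log\!\left(1 + \frac{\alpha T}{n\,e^{\alpha c}}\right),
\]
and this diverges as $T \to +\infty$. Hence the rescaled parameter $s$ sweeps out $[0,\infty)$ and the rescaled flow is defined on $M^{n} \times [0,\infty)$.

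For smooth convergence, I would combine three previously established facts: (i) the uniform higher-order H\"older bounds (\ref{imfcone-holder-02}) on $\widetilde{u}$, giving precompactness of the family $\{\widetilde{u}(\cdot,s)\}_{s \geq s_{0}}$ in every $C^{k}(M^{n})$; (ii) the exponential gradient decay from Lemma \ref{lemma5-1}, namely $|D\widetilde{u}(\cdot,s)| \leq C e^{-\lambda s}$, which forces every subsequential limit to be a \emph{constant} function on $M^{n}$; and (iii) the uniform two-sided area control on $\widetilde{M}_{s}$ derived in the preceding lemma, which identifies the limiting constant as the number $r_{\infty}$ satisfying (\ref{radius}). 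Applying the Arzel\`a--Ascoli theorem in every $C^{k}$ topology then yields $\widetilde{u}(\cdot,s) \to r_{\infty}$ in $C^{\infty}(M^{n})$.

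Geometrically, since $\widetilde{M}_{s}$ is the radial graph of $\widetilde{u}(\cdot,s)$ over $M^{n}\subset \mathbb{S}^{n}$, smooth convergence of the height function to the constant $r_{\infty}$ is equivalent to smooth convergence of the leaves to $r_{\infty} M^{n}$, which is precisely the prescribed piece of the round sphere of radius $r_{\infty}$ sitting inside $\Sigma^{n}$. The perpendicularity to $\Sigma^{n}$ passes to the limit from the preserved Neumann condition $\nabla_{\mu}\widetilde{u} \equiv 0$, and is in any case automatic for spheres centered at the apex of the cone. I do not anticipate a genuine obstacle here: all the analytic content---the $C^{0}$, gradient, H\"older, decay, and area estimates, together with the identification of $r_{\infty}$---was already carried out, so the proof amounts to checking that the reparametrization from $t$ to $s$ preserves the full time interval and then citing those results.
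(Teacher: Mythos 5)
Your proposal is correct and matches the paper's approach: the paper gives no separate proof of this theorem, simply stating it as the geometric summary of Theorem \ref{key-2} (long-time existence), the H\"older and higher-order estimates, Lemma \ref{lemma5-1} (exponential gradient decay), and Lemma \ref{rescaled flow} (the $C^\infty$ convergence of $\widetilde{u}$ to the constant $r_\infty$ via Arzel\`a--Ascoli and the area bounds). Your explicit check that $s(T)=\frac{n}{\alpha}\log\bigl(1+\frac{\alpha T}{n e^{\alpha c}}\bigr)\to\infty$ as $T\to\infty$ is a small but worthwhile addition that the paper leaves implicit.
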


\vspace{0.2 cm}

$\\$\textbf{Acknowledgements}. This research was supported in part
by the National Natural Science Foundation of China (Grant Nos.
11801496 and 11926352), China Scholarship Council, the Fok Ying-Tung
Education Foundation (China), and Hubei Key Laboratory of Applied
Mathematics (Hubei University). The first author, Prof. J. Mao,
wants to thank the Department of Mathematics, IST, University of
Lisbon for its hospitality during his visit from September 2018 to
September 2019. The authors would like to thank Prof. Li Chen and
Dr. Di Wu for useful discussions during the preparation of this
paper.

\vspace {1 cm}

\end{document}